\newtheorem{thm}{Theorem}[section]
\newtheorem{prop}[thm]{Proposition}
\newtheorem{cor}[thm]{Corollary}
\newtheorem{lem}[thm]{Lemma}
\newtheorem{prob}[thm]{Problem}
\theoremstyle{definition}
\newtheorem{defn}[thm]{Definition}
\newtheorem{rem}[thm]{Remark}
\newcommand{\IZ}{\mathbb Z}
\newcommand{\IR}{\mathbb R}
\newcommand{\C}{\mathcal C}
\newcommand{\U}{\mathcal U}
\newcommand{\V}{\mathcal V}
\newcommand{\W}{\mathcal W}
\newcommand{\WAP}{\mathrm{WAP}}
\newcommand{\AP}{\mathrm{AP}}
\newcommand{\SAP}{\mathrm{SAP}}
\newcommand{\FA}{\mathrm{FA}}
\newcommand{\FM}{\mathrm{FM}}
\newcommand{\cc}{\mathfrak c}
\newcommand{\w}{\omega}
\newcommand{\cl}{\mathrm{cl}}
\newcommand{\id}{\mathrm{id}}
\newcommand{\Ra}{\Rightarrow}
\title[Bicyclic semigroup in countably compact semigroups]{Embedding the bicyclic semigroup into countably compact topological semigroups}
\author{Taras Banakh}
\address{Instytut Matematyki, Uniwersytet Humanistyczno-Przyrodniczy Jana Kochanowskiego w Kielcach, Poland \\ and Department of
Mathematics, Lviv National University,  Universytetska 1, 79000,
Ukraine} \email{T.O.Banakh@gmail.com}
\author{Svetlana Dimitrova}
\address{National Technical University ``Kharkiv Polytechnical Institute", Frunze 21, Kharkiv, 61002, Ukraine}
\email{s.dimitrova@mail.ru}
\author{Oleg~Gutik}
\address{Department of Mechanics and Mathematics, Ivan Franko Lviv
National University, Universytetska 1, Lviv, 79000, Ukraine}
\email{ovgutik@yahoo.com}
\begin{document}
\begin{abstract} We study algebraic and topological properties
of topological semigroups containing a copy of the bicyclic
semigroup $\C(p,q)$. We prove that a  topological semigroup $S$
with pseudocompact square contains no dense copy of $\C(p,q)$. On
the other hand, we construct a (consistent) example of a  pseudocompact
(countably compact) Tychonoff semigroup containing a copy of $\C(p,q)$.
\end{abstract}

\subjclass[2000]{22A15, 54C25, 54D35, 54H15}

\keywords{Topological semigroup, semitopological semigroup,
bicyclic semigroup, embedding, extension, semigroup compactification}

\maketitle

In this paper we study the structural properties of topological
semigroups that contain a copy of the bicyclic semigroup $\C(p,q)$
and present a (consistent) example of a Tychonoff pseudocompact (countably compact)
semigroup $S$ that contains $\C(p,q)$. This example shows that the
theorem of Koch and Wallace \cite{KW} saying that compact
topological semigroups do not contain bicyclic subsemigroups
cannot be generalized to the class of pseudocompact or countably compact
topological semigroups. Also this example shows that the presence of an inversion is essential in a result of Gutik and Repov\v s \cite{GR} who proved that the bicyclic semigroup does not embed into a countably compact topological inverse semigroup.

The presence or absence of a bicyclic subsemigroup in a given
(topological) semigroup $S$ has important implications for
understanding the algebraic (and topological) structure of $S$.
For example, the well-known Andersen Theorem \cite{Andersen1952}, \cite[2.54]{CP} 
 says that a simple semigroup with an
idempotent but without a copy of $\C(p,q)$ is completely simple
and hence by the Rees-Suschkewitsch Theorem \cite{Rees}, has the
structure of a sandwich product $[X,H,Y]_\sigma$ of two sets $X,Y$
and a group $H$ connected by a suitable sandwich function
$\sigma:Y\times X\to H$. The Rees-Suschkewitsch Theorem has also a topological version, see \cite{BDG}. 

Having in mind the mentioned result of Koch and Wallace \cite{KW},
I.I.~Guran asked if the bicyclic semigroup can be embedded into a
countably compact topological semigroup. In this paper we shall
find many conditions on a topological semigroup $S$ which forbid
$S$ to contain a bicyclic subsemigroup. One of the simplest
conditions is the countable compactness of the square $S\times S$.
On the other hand, we construct a Tychonoff pseudocompact semigroup that contains a bicyclic semigroup. Moreover, assuming the existence of a countably compact abelian torsion-free topological group without convergent sequences we shall construct
an example of a Tychonoff countably compact topological semigroup
that contains a copy of the bicyclic semigroup.

To construct such pathological semigroups, we shall study the operation of
attaching a discrete semigroup $D$ to a topological semigroup $X$
along a homomorphism $\pi:D\to X$. This construction has two
ingredients: topological and algebraic, discussed in the next four
sections. In section~\ref{s5} we establish some structure
properties of topological semigroups that contain a copy of the
bicyclic subsemigroup and in Section~\ref{s6}  we  construct our
main counterexample. Our method of constructing this
counterexample is rather standard and exploits the ideas of
D.~Robbie, S.~Svetlichny \cite{RS} (who constructed a countably
compact cancellative semigroup under CH) and A.~Tomita \cite{Tom96}
(who weakened the Continuum Hypothesis in their result to a weaker
version of Martin's Axiom).

All topological spaces appearing in this paper are assumed to be
Hausdorff.

\section{Attaching a discrete space to a topological space}

In this section we describe a simple construction of attaching a
discrete space $D$  to a topological space $M$ along a map
$\pi:D\to M$ and will investigate topological properties of the
obtained space $D\cup_\pi M$. Although all non-trivial applications concern infinite $D$, we do not restrict ourselves by infinite spaces and formulate our results for any (not necessarily infinite) discrete space $D$.

Let $D$ be a discrete topological space. If $D$ is infinite, then let $\alpha D=D\cup\{\infty\}$ be the Aleksandrov compactification of $D$. If $D$ is finite, then let $\alpha D=D\cup\{\infty\}$ be the topological sum of $D$ and the singleton $\{\infty\}$ for some point $\infty\notin D$. 

Given a map
$\pi:D\to M$ to a $T_1$-topological space $M$, consider the
closed subspace
 $$
D\cup_\pi M=\{(x,\pi(x)):x\in D\}\cup (\{\infty\}\times M)
 $$
of the product $\alpha D\times M$. We shall identify the space $D$
with the open discrete subspace $\{(x,\pi(x)):x\in D\}$ and $M$
with the closed subspace $\{\infty\}\times M$ of $D\cup_\pi M$.
Let $\bar \pi=\pi\cup\id_M:D\cup_\pi M\to M$ denote the projection
to the second factor. Observe that the topology of the space
$D\cup_\pi M$ is the weakest $T_1$-topology that induces the original
topologies on the subspaces $D$ and $M$ of $D\cup_\pi M$ and makes
the map $\bar\pi$ continuous.

The following (almost trivial) propositions describe some elementary properties of the space $D\cup_\pi M$.

\begin{prop}\label{p1.1}
If for some $i\le 3\frac12$ the space $M$ satisfies the separation axiom $T_i$, then so does the space
$D\cup_\pi M$.
\end{prop}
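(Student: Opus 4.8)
The plan is to handle each separation axiom $T_i$ for $i \in \{0,1,2,3,3\tfrac12\}$ more or less separately, exploiting the explicit realization of $D\cup_\pi M$ as a subspace of the product $\alpha D \times M$. The cleanest observation to make first is that $\alpha D$ is Tychonoff for \emph{any} discrete $D$: when $D$ is finite it is a finite discrete space with one extra isolated point, and when $D$ is infinite it is the one-point compactification of a discrete (hence locally compact Hausdorff) space, which is compact Hausdorff and therefore $T_{3\frac12}$. Since the product of $T_i$ spaces is $T_i$ for each $i \le 3\tfrac12$, and subspaces inherit these axioms, the product $\alpha D \times M$ is $T_i$ whenever $M$ is, and $D\cup_\pi M$, being a subspace of that product, is $T_i$ as well.

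So the heart of the proof is just verifying that $\alpha D$ is $T_i$ for each relevant $i$ and invoking the standard productivity and hereditary facts. First I would record that $T_0, T_1, T_2$ are both productive and hereditary, and likewise regularity ($T_3$) and complete regularity ($T_{3\frac12}$) are productive and hereditary; these are textbook results. Then I would note that $\alpha D$ is compact Hausdorff (or finite discrete plus a point, which is trivially compact Hausdorff), hence normal and in particular $T_{3\frac12}$, so $\alpha D$ satisfies every $T_i$ with $i \le 3\tfrac12$. Combining these, for any $M$ satisfying $T_i$ the ambient product satisfies $T_i$, and the closed subspace $D\cup_\pi M$ does too.

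The one point that deserves a sentence of care, rather than being a genuine obstacle, is the interplay between the subspace topology $D\cup_\pi M$ inherits from $\alpha D \times M$ and the intrinsic description given just before the proposition (the weakest $T_1$-topology inducing the right topologies on $D$ and $M$ and making $\bar\pi$ continuous). One should confirm these coincide, so that the separation axioms proved for the subspace are the axioms for the space as defined; but this is immediate from the way $D\cup_\pi M$ was introduced as literally a subspace of the product. Given that identification, no further work is needed.

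The main (very mild) obstacle I anticipate is purely bookkeeping: ensuring that the separation axioms in the half-integer range are treated with the correct conventions, since $T_3$ and $T_{3\frac12}$ are sometimes taken to include the $T_1$ (or Hausdorff) hypothesis and sometimes not. Because the paper has already declared all spaces Hausdorff, I would adopt the convention that $T_3$ means regular-plus-$T_1$ and $T_{3\frac12}$ means Tychonoff-plus-$T_1$, which makes the productivity and hereditary statements uniformly true and lets the single argument ``$\alpha D$ is Tychonoff, products and subspaces preserve $T_i$'' cover all cases at once. No delicate estimate or construction is required; the proposition is genuinely ``almost trivial'' as the authors say.
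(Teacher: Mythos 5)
Your proof is correct and is exactly the argument the paper intends: Proposition~\ref{p1.1} is stated without proof as ``almost trivial,'' and the canonical justification is the one you give --- $\alpha D$ is compact Hausdorff (hence Tychonoff, satisfying every $T_i$ with $i\le 3\frac12$), each axiom $T_i$ with $i\le 3\frac12$ is both productive and hereditary, and $D\cup_\pi M$ is by its very definition a subspace of $\alpha D\times M$, so no reconciliation of topologies is actually needed. The only cosmetic remark is that your case list $i\in\{0,1,2,3,3\frac12\}$ omits the intermediate axiom $T_{2\frac12}$, but your uniform argument covers it without change, since that axiom is likewise productive and hereditary.
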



\begin{prop}\label{p1.2}
If $M$ is (separable) metrizable and $D$ is countable, then the
space $D\cup_\pi M$ is (separable) metrizable too.
\end{prop}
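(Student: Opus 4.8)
The plan is to exploit the definition of $D\cup_\pi M$ as a (closed) subspace of the product $\alpha D\times M$ and to reduce everything to properties of the two factors. Since both metrizability and separable metrizability (equivalently, for metrizable spaces, second countability) are hereditary under passing to subspaces, it suffices to prove that the ambient product $\alpha D\times M$ is (separable) metrizable whenever $M$ is (separable) metrizable and $D$ is countable.

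The one nontrivial ingredient is to verify that the factor $\alpha D$ is (separable) metrizable when $D$ is countable. If $D$ is finite, then $\alpha D=D\cup\{\infty\}$ is a finite discrete space and there is nothing to prove. If $D$ is countably infinite, then $\alpha D$ is the Aleksandrov one-point compactification of a countable discrete space; its topology has a countable base consisting of the singletons $\{x\}$, $x\in D$, together with the sets $\{\infty\}\cup(D\setminus F)$ for finite $F\subseteq D$ (of which there are only countably many). Hence $\alpha D$ is second countable, and being compact Hausdorff it is regular, so by the Urysohn metrization theorem it is separable metrizable.

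With this in hand the proof concludes as follows. The product of two metrizable spaces is metrizable (for instance, via the sum of the two metrics), so $\alpha D\times M$ is metrizable, and therefore so is its subspace $D\cup_\pi M$; this settles the unparenthesized assertion. For the parenthesized one, assume in addition that $M$ is separable metrizable, hence second countable. A finite product of second countable spaces is second countable, so $\alpha D\times M$ is second countable, and second countability passes to the subspace $D\cup_\pi M$. A second countable metrizable space is separable, which yields the separable metrizability of $D\cup_\pi M$. The argument presents no genuine obstacle: the only point requiring a moment's care is the metrizability (second countability) of the one-point compactification $\alpha D$, which amounts to the familiar fact that a convergent sequence together with its limit is a separable metrizable space.
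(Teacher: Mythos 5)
Your proof is correct and follows the route the paper evidently intends: Proposition~\ref{p1.2} is stated there without proof (among the ``almost trivial'' propositions), and the natural argument is exactly yours --- view $D\cup_\pi M$ as a subspace of $\alpha D\times M$ and observe that $\alpha D$ is compact, Hausdorff and second countable (hence metrizable) when $D$ is countable. You also handle the one genuinely delicate point correctly, namely routing the separability claim through second countability, which is hereditary, rather than through separability itself, which is not hereditary in general spaces.
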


\begin{prop}\label{p1.3}
If the space $M$ is compact, then so is the space $D\cup_\pi M$.
\end{prop}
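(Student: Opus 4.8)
The plan is to realize $D\cup_\pi M$ as a closed subspace of a compact space and then invoke the standard fact that a closed subspace of a compact space is compact. The ambient compact space will be the product $\alpha D\times M$ already used in the definition of $D\cup_\pi M$.

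First I would check that $\alpha D$ is compact. When $D$ is infinite this is the classical fact that the one-point (Aleksandrov) compactification of any discrete space is compact; when $D$ is finite, $\alpha D$ is a finite discrete space, hence trivially compact. In either case $\alpha D$ is compact, and since $M$ is compact by hypothesis, the product $\alpha D\times M$ is compact as a product of two compact spaces.

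Next I would confirm that $D\cup_\pi M$ is a closed subset of $\alpha D\times M$; this is already asserted in the construction, but let me indicate why it holds. Take any point $(a,m)\in(\alpha D\times M)\setminus(D\cup_\pi M)$. Since $\{\infty\}\times M\subseteq D\cup_\pi M$, we must have $a\in D$, and since $(a,\pi(a))$ is the only point of $D\cup_\pi M$ with first coordinate $a$, it follows that $m\neq\pi(a)$. As $\{a\}$ is open in $\alpha D$ (the point $a\in D$ is isolated) and $M$ is $T_1$, the set $M\setminus\{\pi(a)\}$ is an open neighbourhood of $m$; hence $\{a\}\times(M\setminus\{\pi(a)\})$ is an open neighbourhood of $(a,m)$ disjoint from $D\cup_\pi M$. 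Thus the complement is open and $D\cup_\pi M$ is closed.

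Finally, being a closed subspace of the compact space $\alpha D\times M$, the space $D\cup_\pi M$ is itself compact, which is the desired conclusion. I do not expect any genuine obstacle here: the only mildly nontrivial ingredient is the closedness of $D\cup_\pi M$, which rests on the $T_1$ separation of $M$ and has in effect been built into the very definition of the attaching construction.
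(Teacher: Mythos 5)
Your proof is correct and coincides with the argument the paper intends: the space $D\cup_\pi M$ is by construction a closed subspace of $\alpha D\times M$, which is compact as the product of the compact space $\alpha D$ with the compact space $M$ (this is exactly the route the paper itself takes in the proof of Proposition~\ref{p1.4}). Your verification that $D\cup_\pi M$ is closed, using the isolatedness of points of $D$ in $\alpha D$ and the $T_1$ separation of $M$, is a sound filling-in of the detail the paper leaves implicit in the phrase ``the closed subspace.''
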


We recall that a topological space $X$ is {\em countably compact}
if each countable open cover of $X$ has a finite subcover. This is equivalent to saying that the space $X$ contains no infinite closed discrete subspace.

\begin{prop}\label{p1.4}
If some power $M^\kappa$ of the space $M$ is countably compact,
then the power $(D\cup_\pi M)^\kappa$ is countably compact too.
\end{prop}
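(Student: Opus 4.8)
The plan is to exhibit $(D\cup_\pi M)^\kappa$ as a closed subspace of a countably compact space and to invoke the preservation of countable compactness under closed subspaces. Recall from the construction that $D\cup_\pi M$ is a \emph{closed} subspace of the product $\alpha D\times M$, and that $\alpha D$ is compact (being either the one-point compactification of $D$ when $D$ is infinite, or a finite discrete space when $D$ is finite). Raising to the $\kappa$-th power and regrouping coordinates yields a homeomorphism $(\alpha D\times M)^\kappa\cong (\alpha D)^\kappa\times M^\kappa$, under which $(D\cup_\pi M)^\kappa$ is carried onto a closed subset, since a product of closed subspaces is closed in the product topology. By the Tychonoff theorem $(\alpha D)^\kappa$ is compact, while $M^\kappa$ is countably compact by hypothesis.

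Thus the proposition reduces to two facts: (i) the product of a compact space and a countably compact space is countably compact; and (ii) a closed subspace of a countably compact space is countably compact. Granting these, $(\alpha D)^\kappa\times M^\kappa$ is countably compact by (i), and its closed subspace $(D\cup_\pi M)^\kappa$ is then countably compact by (ii). Fact (ii) is immediate from the description of countable compactness as the absence of infinite closed discrete subspaces, recalled just before the statement, since a closed subset of a closed subset is closed.

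The only genuinely non-formal step, and the one I expect to be the main obstacle, is (i). I would prove it using the (equivalent, for arbitrary spaces) characterization that countable compactness means every sequence has a cluster point. Given a sequence $(x_n,k_n)$ in $X\times K$ with $K$ compact and $X$ countably compact, I would first pick a cluster point $x_0$ of $(x_n)$ in $X$, so that $I_U=\{n:x_n\in U\}$ is infinite for every open $U\ni x_0$. The closed sets $F_{U,N}=\cl\{k_n:n\in I_U,\ n\ge N\}\subseteq K$ then form a family with the finite intersection property (for finitely many pairs, intersect the $U$'s and take the largest $N$), so compactness of $K$ furnishes a point $k_0\in\bigcap_{U,N}F_{U,N}$. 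A direct check shows $(x_0,k_0)$ is a cluster point of $(x_n,k_n)$: any basic neighborhood $U\times V$ meets the tail of the sequence, because $k_0\in\cl\{k_n:n\in I_U,\ n\ge N\}$ forces some index $n\in I_U$ with $n\ge N$ and $k_n\in V$, and such $n$ also satisfies $x_n\in U$. Everything else in the argument is formal bookkeeping with products, embeddings, and closedness.
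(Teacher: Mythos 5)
Your proof is correct and follows essentially the same route as the paper: both realize $(D\cup_\pi M)^\kappa$ as a closed subspace of $(\alpha D\times M)^\kappa\cong(\alpha D)^\kappa\times M^\kappa$ and then invoke the fact that the product of a compact space and a countably compact space is countably compact. The only difference is that the paper cites this product fact from Engelking (3.10.14), whereas you supply a correct self-contained proof of it via cluster points and the finite intersection property.
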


\begin{proof}
Since $D\cup_\pi M$ is a closed subspace of $\alpha D\times M$,
the power $(D\cup_\pi M)^\kappa$ is a closed subspace of $(\alpha
D\times M)^\kappa$. So, it suffices to check that the latter space
is countably compact. Since the product of a countably compact
space and a compact space is countably compact \cite[3.10.14]{En},
the product $M^\kappa\times (\alpha D)^\kappa$ is countably compact and so
is its topological copy $(\alpha D\times M)^\kappa$.
\end{proof}

If the space $M$ is Tychonoff, then $D\cup_\pi M$ is a subspace of
the compact Hausdorff space $D\cup_\pi \beta M$ where $\beta M$ is
the Stone-\v Cech compactification of $M$. Assuming that $M$ is
countably compact at $\pi(D)$ we shall show that $D\cup_\pi \beta
M$ coincides with the Stone-\v Cech compactification of $D\cup_\pi
M$.
\smallskip

We shall say that a topological space $X$ is {\em countably
compact} at a subset $A\subset X$ if each infinite subset
$B\subset A$ has an accumulation point $x$ in $X$. The latter
means that each neighborhood $O(x)$ of $x$ contains infinitely
many points of the set $B$.

\begin{prop}\label{p1.5}
If the space $M$ is Tychonoff and is countably compact at the
subset $\pi(D)$, then $D\cup_\pi\beta M$ is the Stone-\v Cech
compactification of $D\cup_\pi M$.
\end{prop}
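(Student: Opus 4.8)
The plan is to verify that $D\cup_\pi\beta M$ is a Hausdorff compactification of $D\cup_\pi M$ in which $D\cup_\pi M$ is $C^*$-embedded; by the standard characterisation of the Stone--\v Cech compactification this identifies $D\cup_\pi\beta M$ with $\beta(D\cup_\pi M)$. First I would observe that $D\cup_\pi\beta M$ is compact by Proposition~\ref{p1.3} (applied to the compact space $\beta M$), that it is Hausdorff as a subspace of $\alpha D\times\beta M$, and that $D\cup_\pi M$ is Tychonoff by Proposition~\ref{p1.1}. Since $M\subset\beta M$ is a topological embedding, $\alpha D\times M$ embeds into $\alpha D\times\beta M$ and $D\cup_\pi M=(D\cup_\pi\beta M)\cap(\alpha D\times M)$, so the original topology on $D\cup_\pi M$ agrees with the subspace topology inherited from $D\cup_\pi\beta M$; moreover $\{\infty\}\times M$ is dense in $\{\infty\}\times\beta M$, so $D\cup_\pi M$ is dense in $D\cup_\pi\beta M$. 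It then remains to show that every continuous map $f\colon D\cup_\pi M\to[0,1]$ extends continuously over $D\cup_\pi\beta M$.

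Write $\phi=f|_M\colon M\to[0,1]$ and $h\colon D\to[0,1]$, $h(x)=f(x,\pi(x))$, and let $g\colon\beta M\to[0,1]$ be the unique continuous extension of $\phi$. The heart of the argument --- and the only place where countable compactness of $M$ at $\pi(D)$ is used --- is the claim that for every $\delta>0$ the set $A=\{x\in D:|h(x)-\phi(\pi(x))|\ge\delta\}$ is finite. I would prove this by contradiction, assuming $A$ infinite. If $\pi(A)$ is infinite, then, being an infinite subset of $\pi(D)$, it has an accumulation point $m^*\in M$; shrinking a neighbourhood $W$ of $m^*$ using both the continuity of $f$ at $(\infty,m^*)$ and the continuity of $\phi$ at $m^*$, one finds (avoiding a finite exceptional set) a point $x\in A$ with $\pi(x)\in W$ for which $|h(x)-\phi(m^*)|$ and $|\phi(\pi(x))-\phi(m^*)|$ are both below $\delta/2$, whence $|h(x)-\phi(\pi(x))|<\delta$, contradicting $x\in A$. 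If instead $\pi(A)$ is finite, some fibre $\pi^{-1}(m_0)\cap A$ is infinite, and continuity of $f$ at $(\infty,m_0)$ directly forces $|h(x)-\phi(m_0)|<\delta$ for all but finitely many such $x$, again a contradiction. This dichotomy, and in particular securing an accumulation point inside $M$, is exactly what countable compactness at $\pi(D)$ provides, and is where I expect the main difficulty to lie.

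Finally I would define $\bar f\colon D\cup_\pi\beta M\to[0,1]$ by $\bar f(x,\pi(x))=h(x)$ for $x\in D$ and $\bar f(\infty,b)=g(b)$ for $b\in\beta M$, so that $\bar f$ extends $f$; the value on the slice $\{\infty\}\times\beta M$ is in any case forced by density of $\{\infty\}\times M$. Continuity at the isolated points $(x,\pi(x))$, $x\in D$, is automatic. To check continuity at a point $(\infty,b)$, given $\epsilon>0$ I would use the claim to fix the finite set $F=\{x:|h(x)-\phi(\pi(x))|\ge\epsilon/2\}$ and the continuity of $g$ at $b$ to choose an open $V\ni b$ in $\beta M$ with $|g(b')-g(b)|<\epsilon/2$ for $b'\in V$; then the basic neighbourhood $((\alpha D\setminus F)\times V)\cap(D\cup_\pi\beta M)$ is mapped by $\bar f$ into $(g(b)-\epsilon,g(b)+\epsilon)$, since on $\{\infty\}\times V$ one has $\bar f=g$, while for a point $(x,\pi(x))$ with $x\notin F$ and $\pi(x)\in V$ one estimates $|h(x)-g(b)|\le|h(x)-\phi(\pi(x))|+|g(\pi(x))-g(b)|<\epsilon/2+\epsilon/2$, using $g|_M=\phi$. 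Thus $\bar f$ is a continuous extension of $f$, which shows $D\cup_\pi M$ is $C^*$-embedded and hence that $D\cup_\pi\beta M=\beta(D\cup_\pi M)$.
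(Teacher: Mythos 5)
Your proof is correct, and it takes a genuinely different route from the paper's. The paper works inside the abstract compactification $\beta(D\cup_\pi M)$: since $M$ is a retract of $D\cup_\pi M$, its closure there is $\beta M$, and the authors show by a covering argument that the subset $D\cup\beta M$ is compact --- a finite subcover $\mathcal{V}$ of $\beta M$ misses only finitely many points of $D$, this finiteness being exactly where countable compactness at $\pi(D)$ enters --- so that $D\cup\beta M$, being compact and dense, is all of $\beta(D\cup_\pi M)$, whence the canonical extension $\beta i:\beta(D\cup_\pi M)\to D\cup_\pi\beta M$ of the inclusion is a continuous bijection of compact Hausdorff spaces and thus a homeomorphism. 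You instead verify directly that $D\cup_\pi M$ is $C^*$-embedded in the concrete compactification $D\cup_\pi\beta M$ and invoke the standard characterization of $\beta$. The crux of your argument, the finiteness of $A=\{x\in D:|f(x,\pi(x))-f(\infty,\pi(x))|\ge\delta\}$, is the same pressure point in disguise: both your lemma and the paper's finiteness of $D'=D\setminus\bigcup\mathcal{V}$ rest on the observation that an infinite violating subset of $D$ would (via countable compactness at $\pi(D)$) push down to a set with an accumulation point $a\in M$, near which a basic neighborhood $\bar\pi^{-1}(O(a))\setminus F$ permits only finitely many violations. Your case dichotomy ($\pi(A)$ infinite versus an infinite fibre over some $m_0$) is handled correctly --- note the fibre case uses only continuity of $f$ at $(\infty,m_0)$, not the compactness hypothesis --- and the concluding $\epsilon/2$-estimate for continuity of $\bar f$ at points $(\infty,b)$, using $g|_M=\phi$ and the finite exceptional set, is sound. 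What your approach buys is an explicit, self-contained extension of bounded functions with no need to identify $\beta M$ inside the abstract space $\beta(D\cup_\pi M)$; what the paper's buys is brevity and freedom from epsilon bookkeeping, at the cost of the slightly delicate retraction-based identification and the dense-compact-subset argument.
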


\begin{proof} By Proposition~\ref{p1.5}, the space $D\cup_\pi M$ is Tychonoff and hence has the Stone-\v Cech
compactification $\beta(D\cup_\pi M)$. Since the space $M$ is a
retract of $D\cup_\pi M$, the compactification
$\beta M$ is a retract of $\beta(D\cup_\pi M)$. Let $\beta
i:\beta(D\cup_\pi M)\to D\cup_\pi\beta M$ be the Stone-\v Cech
extension of the identity inclusion $i:D\cup_\pi M\to
D\cup_\pi\beta M$. We claim that $\beta i$ is a homeomorphism.

First we show that the subset $D\cup\beta M\subset\beta(D\cup_\pi
M)$ is compact. Indeed, given an open cover $\U$ of $D\cup\beta M$
we can find a finite subcover $\V\subset\U$ of $\beta M$ and then
consider the set $D'=D\setminus\bigcup\V$. We claim that this set
$D'$ is finite. Assuming the converse and using the countable
compactness of $M$ at $\pi(D)$ we could find a point $a\in M$ such
that for every neighborhood $O(a)\subset M$ the set $\{x\in
D':\pi(x)\in O(a)\}$ is infinite. Take any open set $V\in\V$
containing the point $a$. By the definition of the topology on
$D\cup_\pi M$ there is a neighborhood $O(a)\subset M\cap V$ of $a$ in $M$ 
and a finite subset $F\subset D$ such that
$\bar\pi^{-1}(O(a))\setminus F\subset V$.

Then the set $\{x\in D':\pi(x)\in O(a)\}$ lies in $F$ and hence is
finite, which is a contradiction. Hence the set $D'$ is finite and
we can find a finite subfamily $\W\subset\U$ with
$D'\subset\bigcup\W$. Then $\V\cup\W\subset\U$ is a finite subcover
of $D\cup\beta M$. Now we see that the subset $D\cup\beta M$,
being compact and dense in $\beta(D\cup_\pi M)$, coincides with
$\beta(D\cup_\pi M)$. It follows that the continuous map $\beta
i=\beta i|D\cup\beta M$ is bijective and hence is a homeomorphism.
\end{proof}

Following A.V.~Arkhangel'skii \cite[III.\S4]{Arh}, we say that a topological space $X$ is {\em countably pracompact} if $X$ is countably compact at a dense subset of $X$. It is clear that each countably compact space is countably pracompact.

\begin{prop}\label{p1.6}
The space $D\cup_\pi M$ is countably pracompact if and only if $M$ is
countably compact at a dense subset $A\supset \pi(D)$ of $M$.
\end{prop}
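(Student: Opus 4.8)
The plan is to prove both implications directly, exploiting the fact that $D$ is an open discrete subspace of $D\cup_\pi M$ while $M$ is a closed subspace, together with the explicit description of basic neighborhoods of points of $M$ inherited from $\alpha D\times M$: a set of the form $\bar\pi^{-1}(O)\setminus F$, where $O$ is open in $M$ and $F\subset D$ is finite, is a neighborhood of each point $a\in O\subset M$. The recurring observation is that, since every point of $D$ is isolated, no point of $D$ can be an accumulation point of any set, and moreover every dense subset of $D\cup_\pi M$ must contain all of $D$.

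For the forward implication, suppose $D\cup_\pi M$ is countably compact at a dense set $S$. Then $D\subset S$, so $A:=\bar\pi(S)\supset\pi(D)$, and $A$ is dense in $M$ because $\bar\pi$ is a continuous retraction onto $M$, whence $M=\bar\pi(\mathrm{cl}\,S)\subset\mathrm{cl}\,\bar\pi(S)=\mathrm{cl}\,A$. To see that $M$ is countably compact at $A$, I would take an infinite $B\subset A$, lift it to an infinite $\tilde B\subset S$ by choosing for each $b\in B$ a point $\tilde b\in S$ with $\bar\pi(\tilde b)=b$ (distinct $b$ give distinct $\tilde b$, so $\tilde B$ is infinite), let $z$ be an accumulation point of $\tilde B$ in $D\cup_\pi M$, note that $z\in M$ since points of $D$ are isolated, and finally check that $z$ is an accumulation point of $B$ in $M$: any neighborhood $O(z)$ in $M$ pulls back to a neighborhood $\bar\pi^{-1}(O(z))$ of $z$ meeting $\tilde B$ in an infinite set, whose image under $\bar\pi$ supplies infinitely many points of $B$ inside $O(z)$.

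For the converse, assume $M$ is countably compact at a dense $A\supset\pi(D)$, and set $S:=D\cup A$. This $S$ is dense, since it contains the open set $D$ and the set $A$ which is dense in the closed subspace $M$, so it remains to prove that $D\cup_\pi M$ is countably compact at $S$. Given an infinite $B\subset S$, I would write $B=B_D\cup B_M$ with $B_D\subset D$ and $B_M\subset A$, and argue by cases. If $B_M$ is infinite, its accumulation point in $M$ (from countable compactness at $A$) is at once an accumulation point in $D\cup_\pi M$, since $M$ carries the subspace topology. If $B_M$ is finite, then $B_D$ is infinite; when $\pi(B_D)$ is infinite I would take an accumulation point $z\in M$ of the infinite subset $\pi(B_D)\subset A$ and verify, via the neighborhoods $\bar\pi^{-1}(O(z))\setminus F$, that infinitely many points of $B_D$ land in each such neighborhood; and when $\pi(B_D)$ is finite, the pigeonhole principle yields a point $y\in M$ with $\pi^{-1}(y)\cap B_D$ infinite, and $y$ is then an accumulation point of $B$.

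The routine part is the topological bookkeeping; the only genuine subtlety is that $\pi$ need not be injective, which is precisely why the converse must split according to whether $\pi(B_D)$ is infinite. In the collapsing case one must locate the accumulation point at a single image point $y$ rather than appeal to countable compactness at $A$. Verifying that the chosen point is genuinely an accumulation point in the (non-product) topology of $D\cup_\pi M$, using the explicit basic neighborhoods $\bar\pi^{-1}(O)\setminus F$, is the step where I would take the most care.
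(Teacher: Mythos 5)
Your proof is correct and takes essentially the same approach as the paper's: the forward direction pushes the dense witness set into $M$ via the retraction $\bar\pi$ (you merely spell out the lifting that the paper compresses into one line), and your converse uses the same dense set $D\cup A$ with the identical three-case analysis ($B\cap M$ infinite; $\pi(B\cap D)$ infinite; an infinite fibre $\pi^{-1}(y)\cap B$ by pigeonhole, checked against the basic neighborhoods $\bar\pi^{-1}(O)\setminus F$). Nothing needs to be changed.
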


\begin{proof} If the space $D\cup_\pi M$ is countably pracompact, then it is countably compact at some dense subset $A\subset D\cup_{\pi}M$. The set $A$, being dense, contains the open discrete subspace $D$ of $D\cup_{\pi}M$. The continuity of the retraction $\bar \pi:D\cup_\pi M\to M$ implies that the space $M$ is countably compact at the dense subset $\bar\pi(A)\supset\pi(D)$ of $M$, so $M$ is countably pracompact.

Now assume conversely that the space $M$ is countably compact at a dense subset $A\supset\pi(D)$. We claim that $D\cup_\pi M$ is countably compact at the dense subset $D\cup A$. We need to check that each infinite subset $B\subset D\cup A$ has a cluster point in $D\cup_\pi M$. If $B\cap A$ is infinite, then the set $B\cap A\subset B$ has an accumulation point in $M$ because $M$ is countably compact at $A$. If $\pi(B\setminus A)$ is infinite,
then $\pi(B\setminus A)$ has an accumulation point $x$ in $M$ because of the countable compactness of $M$ at $\pi(D)\subset A$. By the definition of the topology on $D\cup_\pi M$, the point $x$ is an accumulation point of the set $B\setminus A$. It remains to consider the case when the sets $A\cap B$ and $\pi(B\setminus A)$ are finite. In this case for some point $c\in\pi(B\setminus A)$ the set $C=B\cap\pi^{-1}(c)$ is infinite and then $c$ is an accumulation point of the set $C\subset B$ by the definition of the topology  of $D\cup_\pi M$.
\end{proof}

A topological space $X$ is defined to be {\em pseudocompact} if
each locally finite  open cover of $X$ is finite. According to
\cite[3.10.22]{En} a Tychonoff space $X$ is pseudocompact if and
only if each continuous real-valued function on $X$ is bounded.
For each topological space we have the following implications: 
\smallskip

\centerline{countably compact $\Ra$ countably pracompact $\Ra$ pseudocompact.}
\smallskip

\begin{prop}\label{p1.7}
The space $D\cup_\pi M$ is pseudocompact if and only if $M$ is
pseudocompact and $M$ countably compact at the subset
$\pi(D)\subset M$.
\end{prop}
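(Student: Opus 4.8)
The plan is to prove both implications through the reformulation of pseudocompactness as \emph{feeble compactness}: a space is pseudocompact (in the sense used here) if and only if every locally finite family of \emph{nonempty} open sets is finite. The nontrivial half of this equivalence I would get as follows: given an infinite locally finite family $\{V_n\}$ of nonempty open sets, pick $x_n\in V_n$ and observe that $S=\{x_n:n\in\w\}$ is closed and discrete (a neighbourhood meeting finitely many $V_n$ contains only finitely many $x_n$), so that $\{V_n\}$ together with the open set $X\setminus S$ is an infinite locally finite open \emph{cover}. Throughout I shall use that a locally finite family is \emph{point-finite}: each point lies in only finitely many members.

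For the ``only if'' part I would first recall that $\bar\pi:D\cup_\pi M\to M$ is a continuous retraction, so a locally finite open cover of $M$ pulls back to a locally finite open cover of $D\cup_\pi M$, and finiteness of the latter forces finiteness of the former; hence $M$ is pseudocompact. To see that $M$ is countably compact at $\pi(D)$ I would argue by contradiction: an infinite $B\subset\pi(D)$ without accumulation point in $M$ lifts through $\pi$ to an infinite $\tilde B\subset D$ with $\pi|\tilde B$ injective, and by the description of the topology of $D\cup_\pi M$ the absence of accumulation points of $B$ means precisely that $\tilde B$ is closed and discrete. As $\tilde B$ consists of isolated points, the singletons $\{\{x\}:x\in\tilde B\}$ together with $(D\cup_\pi M)\setminus\tilde B$ form an infinite locally finite open cover, contradicting pseudocompactness.

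For the ``if'' part, assume $M$ pseudocompact and countably compact at $\pi(D)$, and let $\V=\{V_n\}$ be a locally finite family of nonempty open sets in $D\cup_\pi M$; I must show $\V$ is finite. First I would bound the members meeting $M$: their traces on $M$ form a locally finite family of nonempty open subsets of the pseudocompact space $M$, hence take only finitely many distinct values, and if infinitely many members shared a single trace $W$, then any point of $W\subset M$ would lie in infinitely many members, violating point-finiteness. Thus only finitely many members meet $M$, and after discarding them I may assume all (still infinitely many) $V_n$ lie in the open discrete part $D$. Using point-finiteness I would then choose distinct representatives $x_n\in V_{n}$ (at each stage avoiding the finitely many members already hit), so that $P=\{x_n\}\subset D$ is infinite and, being a selection from a locally finite subfamily, is closed and discrete in $D\cup_\pi M$.

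It remains to derive a contradiction from such an infinite closed discrete $P\subset D$, and this step, where countable compactness at $\pi(D)$ finally enters, is the main obstacle. I would split into two cases exactly as in the proof of Proposition~\ref{p1.6}: if $\pi(P)$ is infinite it has an accumulation point $m\in M$, and by the definition of the topology $(\infty,m)$ is then an accumulation point of $P$; if $\pi(P)$ is finite, some fibre $P\cap\pi^{-1}(c)$ is infinite and $(\infty,c)$ is an accumulation point of $P$. In either case $P$ fails to be closed discrete, a contradiction, so $\V$ is finite and $D\cup_\pi M$ is pseudocompact. The delicate points to get right are the reduction of ``fat'' members to single representatives via point-finiteness and the bookkeeping identifying pseudocompactness with the absence of infinite locally finite families of nonempty open sets; once these are secured the argument closes cleanly.
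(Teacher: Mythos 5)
Your proof is correct, and while it follows the same overall decomposition as the paper's (split the cover members according to whether they meet $M$, and control the members inside $D$ by countable compactness at $\pi(D)$), the execution differs in both halves in ways worth recording. In the ``only if'' direction the paper obtains pseudocompactness of $M$ as a continuous image and then refutes countable compactness at $\pi(D)$ by exhibiting an unbounded continuous real-valued function on $D\cup_\pi M$; you instead stay entirely within the locally-finite-cover definition, producing an infinite locally finite open cover from the singletons of a closed discrete lift $\tilde B\subset D$ together with the complement --- this is more self-contained, since it never invokes the (Tychonoff) function characterization of pseudocompactness. In the ``if'' direction the paper shows $\V=\{U\in\U: U\cap M\ne\emptyset\}$ is finite, shows $D'=D\setminus\bigcup\V$ is finite by repeating the argument of Proposition~\ref{p1.5}, lets $\W$ be the members meeting $D'$, and concludes from the asserted equality $\U=\V\cup\W$; but that equality is not literally justified, since a locally finite cover may contain open members lying inside $D\cap\bigcup\V$ which meet neither $M$ nor $D'$. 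Your selection argument --- distinct representatives $x_n$ chosen via point-finiteness, followed by the dichotomy of Proposition~\ref{p1.6} ($\pi(P)$ infinite versus an infinite fibre over a single point $c$) --- treats all members contained in $D$ uniformly and thereby closes this small gap; likewise your point-finiteness argument for why infinitely many members cannot share a single trace on $M$ supplies a detail the paper glosses when it says that pseudocompactness of $M$ guarantees $\V$ is finite. In short: same skeleton and the same use of countable compactness at $\pi(D)$ at the crucial step, but your feeble-compactness bookkeeping yields a tighter argument than the printed one.
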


\begin{proof}
Assume that the space $D\cup_\pi M$ is pseudocompact. Then the space $M$ is pseudocompact, being a continuous image of the pseudocompact space $D\cup_\pi M$. 
Next, we prove that $M$ is countably compact at $\pi(D)$. Assuming
the converse, we could find a sequence $D'=\{x_n:n\in\w\}\subset
D$ such that $\pi(x_n)\ne\pi(x_m)$ for $n\ne m$ and the image
$\pi(D')$ is closed and discrete in $M$. Define an unbounded
function $f:D\cup_\pi M\to\IR$ letting
$$
f(x)=\begin{cases}n&\mbox{if $x=x_n$ for some $n\in\w$}\\
0&\mbox{otherwise,}
\end{cases}$$
and check that $f$ is continuous, which contradicts the
pseudocompactness of $D\cup_\pi M$.
\smallskip

To prove the ``if'' part, assume that the space $M$ is
pseudocompact and is countably compact at the subset $\pi(D)$. To
prove that the space $D\cup_\pi M$ is pseudocompact, fix a locally
finite open cover $\U$ of $D\cup_\pi M$ and consider the locally finite open subcover $\V=\{U\in\U:U\cap M\ne\emptyset\}$ of $M$. The pseudocompactness of
$M$ guarantees that the cover $\V$ is finite. Repeating the argument of the proof of
Proposition~\ref{p1.5}, we can check that the set $D'=D\setminus
\bigcup\V$ is finite. The local finiteness of the family $\U$ implies 
that the family $\W=\{U\in\U:U\cap D'\ne\emptyset\}$ is finite. Since $\U=\V\cup\W$, the cover $\U$ of $D\cup_\pi M$ is finite.
\end{proof}

Following \cite{BD}, we define a topological space $X$ to be {\em
openly factorizable} if every continuous map $f:X\to Y$ to a
metrizable separable space $Y$ can be written as the composition
$g\circ p$ of an open continuous map $p:X\to K$ onto a metrizable
separable space $K$ and a continuous map $g:K\to Y$.

\begin{prop}\label{p1.8}
If the set $D$ is countable and $M$ is openly factorizable, then
the space $D\cup_\pi M$ is openly factorizable too.
\end{prop}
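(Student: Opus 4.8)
The plan is to reduce the statement to the openly factorizable space $M$ by factoring the restriction $f|_M$, and then to glue the discrete part $D$ back in as isolated points. So let $f:D\cup_\pi M\to Y$ be continuous with $Y$ separable metrizable, and restrict it to $M$. Since $M$ is openly factorizable, I would write $f|_M=g_M\circ p_M$ with $p_M:M\to K_M$ an open continuous map onto a separable metrizable space $K_M$ and $g_M:K_M\to Y$ continuous. Put $\rho=p_M\circ\pi:D\to K_M$ and $K=D\cup_\rho K_M$; by Proposition~\ref{p1.2} (as $D$ is countable and $K_M$ is separable metrizable) $K$ is separable metrizable. Let $p:D\cup_\pi M\to K$ be the identity on $D$ and $p_M$ on $M$; equivalently $p$ is the restriction to $D\cup_\pi M$ of the open map $\id_{\alpha D}\times p_M:\alpha D\times M\to\alpha D\times K_M$, so $p$ is continuous. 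Finally define $g:K\to Y$ by $g|_{K_M}=g_M$ and $g|_D=f|_D$, so that $f=g\circ p$.

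Next I would reduce the whole proposition to the \emph{openness} of $p$. The fibers of $p$ are singletons over $D$ and are the sets $p_M^{-1}(k)$ over $k\in K_M$, and $f$ is constant on each of them (equal to $g_M(k)$ on $p_M^{-1}(k)$); hence, once $p$ is known to be open it is a quotient map and $g$ is automatically continuous. Thus it remains only to prove that $p$ is open. This is trivial at the isolated points of $D$, so the issue is openness at points of $K_M$. Writing a basic neighbourhood $O(m)=(\{\infty\}\times V)\cup\{(x,\pi(x)):x\in D\setminus F,\ \pi(x)\in V\}$ and computing $p(O(m))$, one sees that $p(O(m))$ is a neighbourhood of $(\infty,p_M(m))$ in $K$ precisely when, for some neighbourhood $V'$ of $p_M(m)$, only finitely many $x\in D$ satisfy $p_M(\pi(x))\in V'$ while $\pi(x)\notin V$. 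In other words, $p$ is open iff the isolated points $p(x)$ ($x\in D$) do not \emph{over-accumulate} in $K_M$: no point $p_M(m)$ may be an accumulation point of $\{p_M(\pi(x)):\pi(x)\notin V\}$.

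The crux, and the main obstacle, is that such over-accumulation can arise exactly when $p_M$ collapses two distinct accumulation points of the countable set $\pi(D)$ into one point of $K_M$. To rule this out I would not factor $f|_M$ directly, but instead factor the richer map $(f|_M,h):M\to Y\times[0,1]^\w$, where $h=(h_i)_{i\in\w}$ is a countable family of continuous functions on the Tychonoff space $M$ chosen so that $h$ is injective on the countable set $\pi(D)$ together with its accumulation points. Because every open factorization of $(f|_M,h)$ refines $h$, the resulting $p_M$ inherits this injectivity, which eliminates the over-accumulation and forces $p$ to be open; it is precisely here that the countability of $D$ is indispensable. The hard part is therefore this last step: extracting, from the countability of $\pi(D)$ and the open factorizability of $M$, an open factorization of $f|_M$ that separates the accumulation points of $\pi(D)$, and then verifying carefully that this separation indeed yields the openness of $p$.
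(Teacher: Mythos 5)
Up to the point where you raise the ``over-accumulation'' issue, your construction is word-for-word the paper's: the paper factors $f|M=g\circ p$ through an open map $p:M\to K$ onto a separable metrizable $K$, forms $D\cup_{p\pi}K$ (separable metrizable by Proposition~\ref{p1.2}), takes $\bar p=\id\cup p$ and $\bar g=f|D\cup g$, and verifies continuity of $\bar g$ via the identity $\bar g^{-1}(U)=\bar p(f^{-1}(U))$ --- which is exactly your quotient-map remark. Your computation of what openness of $\bar p$ at a point $k\in p(V)$ amounts to --- finiteness of $\{x\in D:p(\pi(x))\in W,\ \pi(x)\notin V\}$ for some neighbourhood $W\ni k$ --- is also correct, and your scepticism at this point is not baseless: the paper dispatches this step in one sentence (``it follows from the openness of $p$''), and for a badly chosen factorization the condition can indeed fail (e.g.\ constant $f$ with $K$ a singleton, where $\bar p$ is open only if $\pi(D)$ is almost contained in every open $V$ meeting $\pi(D)$'s closure).

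The genuine gap is your proposed repair, which provably cannot be carried out in the very situations where Proposition~\ref{p1.8} is applied. You want a countable family $h=(h_i)_{i\in\w}$ of continuous functions on $M$ that is injective on $\pi(D)$ together with all its accumulation points. But in the paper's applications (Theorem~\ref{t4.1}(5a), hence Theorem~\ref{t1}(8)) the set $\pi(D)$ is \emph{dense} in $M$ and $M$ is countably compact at $\pi(D)$, so by Lemma~\ref{l1} the closure of every infinite subset of $\pi(D)$ has cardinality $\ge\cc$; the accumulation set of $\pi(D)$ is then hopelessly non-metrizable. Concretely, let $M=\{0,1\}^{\cc}$, which is separable and openly factorizable (being a compact topological group, hence Dugundji compact, cf.\ Section~4), and let $\pi(D)$ be countable dense: since $M$ has no isolated points, \emph{every} point of $M$ is an accumulation point of $\pi(D)$, so your $h$ would be a continuous injection of the compact non-metrizable space $M$ into $[0,1]^\w$, hence a homeomorphic embedding --- a contradiction. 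Moreover, your diagnosis that over-accumulation arises ``exactly when $p_M$ collapses two accumulation points of $\pi(D)$'' is not exact: no compactness of $M$ is assumed, so the points $\pi(x)$ with $p_M(\pi(x))\in W$ and $\pi(x)\notin V$ need not accumulate anywhere in $M$, and the fibre point $m\in V$ over $k$ need not be an accumulation point of $\pi(D)$ at all. So even granting the existence of $h$, the verification you defer as ``the hard part'' would not go through as described, and by your own account it is left unproven. In short: you coincide with the paper through the construction of $p$, $g$ and the continuity argument, you correctly isolate the openness of $\bar p$ as the crux (which the paper asserts outright), but the separation scheme you substitute for it is a dead end precisely in the non-metrizable cases the proposition exists to handle, so the proposal does not constitute a proof.
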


\begin{proof}
Fix any continuous map $f:D\cup_\pi M\to Y$ to a metrizable
separable space $Y$. Since $M$ is openly factorizable, there are an
open continuous map $p:M\to K$ onto a separable metrizable space
$K$ and a continuous map $g:K\to Y$ such that $f|M=g\circ p$.

Consider the map $p\pi=p\circ\pi:D\to K$ and the corresponding
space $D\cup_{p\pi}K$ that is separable and metrizable by
Proposition~\ref{p1.2}. Let $\bar p=\id\cup p:D\cup_\pi M\to
D\cup_{p\pi}K$ be the map that is identity on $D$ and coincides
with the map $p$ on $M$. It follows from the openness of the map
$p$ that the map $\bar p$ is open (and continuous).

Now extend the map $g:K\to Y$ to a map $\bar g:D\cup_{p\pi}K\to Y$
letting $\bar g|D=f|D$. It is easy to see that $f=\bar g\circ \bar
p$. It remains to check that the map $\bar g$ is continuous. Take
any open set $U\subset Y$ and observe that $\bar g^{-1}(U)=\bar
p(f^{-1}(U))$ because $f$ is continuous and $\bar p$ is open.
\end{proof}

\section{Compact extensions of topological semigroups}

In this section we survey some known results on compact extensions
of semitopological semigroups. By a {\em semitopological
semigroup} we understand a topological space $S$ endowed with a
separately continuous semigroup operation $*:S\times S\to S$. If
the operation is jointly continuous, then $S$ is called a {\em
topological semigroup}.

Let $\C$ be a class of compact Hausdorff semitopological
semigroups. By a {\em $\C$-compactification} of a semitopological
semigroup $S$ we understand a pair $(\C(S),\eta)$ consisting of a
compact semitopological semigroup $\C(S)\in\C$ and a continuous
homomorphism $\eta:S\to\C(S)$ (called the {\em canonic
homomorphism}) such that for each continuous homomorphism $h:S\to
K$ to a semitopological semigroup $K\in\C$ there is a unique
continuous homomorphism $\bar h:\C(S)\to K$ such that $h=\bar
h\circ\eta$. It follows that any two $\C$-compactifications of $S$
are topologically isomorphic.

We shall be interested in $\C$-compactifications for the following classes of semigroups:
\begin{itemize}
\item $\WAP$ of compact semitopological semigroups;
\item $\AP$ of compact topological semigroups;
\item $\SAP$ of compact topological groups.
\end{itemize}
The corresponding $\C$-compactifications of a semitopological
semigroup $S$ will be denoted by $\WAP(S)$, $\AP(S)$, and
$\SAP(S)$. The notation came from the abbreviations for weakly
almost periodic, almost periodic, and strongly almost periodic
function rings that determine those compactifications, see
\cite[Ch.IV]{BJM}, \cite[Ch.III]{Rup}, \cite[\S21]{HS}.

The inclusions $\SAP\subset\AP\subset\WAP$ induce canonic
homomorphisms $$\eta:S\to\WAP(S)\to\AP(S)\to\SAP(S)$$ for any
semitopological semigroup $S$. It should be mentioned that the
canonic homomorphism $\eta:S\to \WAP(S)$ need not be injective.
For example, for the group $H_+[0,1]$ of orientation-preserving
homeomorphisms of the interval its WAP-compactification is a
singleton, see \cite{Megrel}.  However, for countably compact
semitopological semigroups the situation is more optimistic. The
following two results are due to E.~Reznichenko \cite{Rez}.

\begin{thm}[Reznichenko]\label{rez1}
For any Tychonoff countably compact semitopological semigroup $S$
the semigroup operation of $S$ extends to a separately continuous
semigroup operation on $\beta S$, which implies that $\beta S$
coincides with the $\WAP$-compactification of $S$.
\end{thm}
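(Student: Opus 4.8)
The plan is to reduce the statement to the single assertion that \emph{every} bounded continuous function on $S$ is weakly almost periodic. Recall that, $S$ being countably compact and hence pseudocompact, every continuous real-valued function on $S$ is bounded, so $C(S)$ with the sup-norm is a Banach space canonically identified with $C(\beta S)$. A function $f\in C(S)$ is \emph{weakly almost periodic} if the family of right translates $\{f_t:t\in S\}$, where $f_t(s)=f(st)$, is relatively compact in the weak topology of $C(S)$. The weakly almost periodic functions form a closed subalgebra of $C(S)$ whose spectrum, equipped with the separately continuous extension of the operation of $S$, is by the standard theory (see \cite[Ch.IV]{BJM}) precisely the $\WAP$-compactification. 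Consequently, once one knows that every $f\in C(S)$ is weakly almost periodic, this subalgebra is all of $C(S)$, its spectrum is $\beta S$, the operation of $S$ extends separately continuously to $\beta S$, and $\beta S$ becomes the $\WAP$-compactification of $S$. Thus the whole theorem follows from this one reduction.

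To establish the reduction I would invoke Grothendieck's double limit criterion, which in the setting of semitopological semigroups asserts that $f\in C(S)$ is weakly almost periodic if and only if for every pair of sequences $(s_n)_{n\in\w}$ and $(t_m)_{m\in\w}$ in $S$ one has
$$
\lim_{n\to\infty}\lim_{m\to\infty}f(s_nt_m)=\lim_{m\to\infty}\lim_{n\to\infty}f(s_nt_m)
$$
whenever both iterated limits exist. Writing $a$ for the left-hand side and $b$ for the right-hand side, the task is to prove $a=b$.

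Here the countable compactness of $S$ enters, and this is the heart of the argument. By countable compactness the sequence $(s_n)$ has a cluster point $p\in S$ and the sequence $(t_m)$ has a cluster point $q\in S$. Fixing $n$, separate continuity makes the map $t\mapsto f(s_nt)$ continuous, so $f(s_nq)$ is a cluster value of the convergent sequence $\bigl(f(s_nt_m)\bigr)_m$; since $\IR$ is Hausdorff this cluster value is unique and equals the inner limit, whence $a=\lim_n f(s_nq)$. The symmetric computation, fixing $m$ and using continuity of $s\mapsto f(st_m)$ at the cluster point $p$, yields $b=\lim_m f(pt_m)$. Finally I would evaluate everything at the single point $pq$: continuity of $s\mapsto f(sq)$ together with the fact that $p$ clusters $(s_n)$ forces $f(pq)=\lim_n f(s_nq)=a$, while continuity of $t\mapsto f(pt)$ together with the fact that $q$ clusters $(t_m)$ forces $f(pq)=\lim_m f(pt_m)=b$. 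Therefore $a=f(pq)=b$, the iterated limits agree, and $f$ is weakly almost periodic.

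The main obstacle I anticipate is not the clean cluster-point computation but the correct invocation of the general theory in the reduction step: one must state Grothendieck's criterion in its sequential form (justified through Eberlein--\v Smulian, which turns relative weak compactness in $C(\beta S)$ into relative weak sequential compactness and thereby explains why testing on two sequences in the dense set $S$ suffices) and cite the standard identification of the spectrum of the algebra of weakly almost periodic functions with the $\WAP$-compactification. With these two pieces of functional-analytic background in place, the countable compactness of $S$ carries the entire topological weight by supplying the cluster points $p,q\in S$ at which separate continuity can be applied.
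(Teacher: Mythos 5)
Your proposal is correct, but it cannot be ``the same approach as the paper'' for the simple reason that the paper contains no proof of this statement at all: Theorem~\ref{rez1} is imported from Reznichenko's paper \cite{Rez} as a known result. Reznichenko's original route goes through extension theorems for separately continuous functions on products of countably compact (pseudocompact) spaces, i.e.\ through Grothendieck-type properties of such spaces; your route is the classical WAP-algebra argument: identify $C^*(S)$ with $C(\beta S)$, invoke the standard facts that $\WAP(S)$ is the spectrum of the algebra of weakly almost periodic functions \cite[Ch.IV]{BJM} and that a bounded $f$ is weakly almost periodic iff Grothendieck's double limit criterion holds for sequences drawn from the dense set $S$ (the dense-subset form of the criterion, plus Eberlein--\v Smulian, as you note), and then verify the criterion directly. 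Both arguments ultimately rest on the same double-limit phenomenon, so the difference is one of packaging: yours is self-contained modulo standard functional-analytic background, while the citation hides that background in \cite{Rez}. The heart of your verification is handled correctly and this is exactly the delicate point: countable compactness yields only \emph{cluster} points $p$ of $(s_n)$ and $q$ of $(t_m)$, not limits, and you correctly exploit that a cluster value of a \emph{convergent} real sequence must equal its limit --- applying separate continuity of the translations $t\mapsto f(s_nt)$, $s\mapsto f(sq)$, $s\mapsto f(st_m)$, $t\mapsto f(pt)$ at the cluster points to get $a=\lim_n f(s_nq)=f(pq)=\lim_m f(pt_m)=b$. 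This also makes transparent why only separate continuity of the operation is needed here, whereas the companion Theorem~\ref{rez2} trades countable compactness for pseudocompactness at the cost of requiring joint continuity: pseudocompactness alone does not supply the cluster points $p,q$ inside $S$ on which your computation lives.
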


The same conclusion holds for Tychonoff pseudocompact topological
semigroups.

\begin{thm}[Reznichenko]\label{rez2}
For any Tychonoff pseudocompact topological semigroup $S$ the
semigroup operation of $S$ extends to a separately continuous
semigroup operation $\beta S$, which implies that $\beta S$
coincides with the $\WAP$-compactification of $S$.
\end{thm}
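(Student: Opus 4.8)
The plan is to reduce the whole statement to a single assertion about function algebras, namely that \emph{every} bounded continuous real-valued function on $S$ is weakly almost periodic. Indeed, the $\WAP$-compactification $\WAP(S)$ is the Gelfand spectrum of the algebra of $\WAP$-functions on $S$, and, being an element of the class of compact semitopological semigroups, it always carries a separately continuous multiplication extending that of $S$. Hence it suffices to prove that the algebra of $\WAP$-functions coincides with the whole algebra $C^*(S)$ of bounded continuous functions: for then the spectrum is $\beta S$, so $\beta S=\WAP(S)$, and the separately continuous operation of $\WAP(S)$ is exactly the extension we seek. So I fix $f\in C^*(S)$ and must show $f$ is $\WAP$, i.e. (by the classical Eberlein--de Leeuw--Glicksberg criterion) that the orbit $Hf=\{f\circ\rho_y:y\in S\}$, where $\rho_y(x)=x*y$, is relatively weakly compact in $C^*(S)$. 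It is here that the \emph{joint} continuity of $*$ (which distinguishes this pseudocompact case from the countably compact Theorem~\ref{rez1}) is used: it guarantees that each $f\circ\rho_y$ lies in $C^*(S)$ and, more importantly, that the two-variable map $F\colon S\times S\to\IR$, $F(x,y)=f(x*y)$, is continuous.

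The key tool will be a theorem of Grothendieck on pseudocompact spaces: on a pseudocompact space a norm-bounded subset of its algebra of bounded continuous functions is relatively weakly compact if and only if it is relatively countably compact in the topology of pointwise convergence. Since $S$ is pseudocompact and $Hf$ is bounded by $\|f\|$, it is therefore enough to show that every sequence $(f\circ\rho_{y_k})_{k\in\w}$ from $Hf$ has a pointwise cluster point that again belongs to $C^*(S)$. A pointwise cluster point $g\colon S\to\IR$ exists automatically, since the sequence lives in the compact cube $[-\|f\|,\|f\|]^S$; the entire difficulty is concentrated in showing that such a $g$ is \emph{continuous}.

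By Grothendieck's double-limit criterion, continuity of every such pointwise cluster point is equivalent to the interchange identity
\[
\lim_{k}\lim_{n} F(x_n,y_k)=\lim_{n}\lim_{k} F(x_n,y_k)
\]
for all sequences $(x_n),(y_k)$ in $S$ for which both iterated limits exist. Verifying this interchange is the heart of the argument and the step I expect to be the main obstacle. The natural temptation --- to extend $F$ continuously to $\beta S\times\beta S$ and simply read off the identity from joint continuity there --- is \emph{not} available: $S\times S$ need not be pseudocompact, and by Glicksberg's theorem the equality $\beta(S\times S)=\beta S\times\beta S$ fails precisely in that situation, so $F$ generally does \emph{not} extend jointly continuously over $\beta S\times\beta S$. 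Instead I would argue the interchange directly, working one coordinate at a time. Choosing cluster points $p,q\in\beta S$ of the sequences $(x_n)$ and $(y_k)$ in the compact space $\beta S$, I would use the one-variable extensions $L_{x_n}\colon\beta S\to\beta S$ of the (continuous) left translations, together with the continuous extension $\widehat f\in C(\beta S)$ of $f$, to evaluate the inner limits as $\lim_k f(x_n*y_k)=\widehat f\big(L_{x_n}(q)\big)$ and symmetrically on the other side, and then to identify both resulting outer limits with a common value governed by $p$ and $q$.

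The delicate point in that last identification is genuinely circular in appearance: passing the outer limit through requires knowing that $x\mapsto\widehat f\big(L_x(q)\big)$ behaves continuously as $x$ ranges over $S$ and runs out to $p\in\beta S$, which is close to the separate continuity we are trying to establish. This is where pseudocompactness of $S$ must be spent a second time, in the form that $S$ meets every nonempty $G_\delta$-subset of $\beta S$ (equivalently, $S$ is $G_\delta$-dense in $\beta S$), so that the cluster points $p,q$ are faithfully approximated from within $S$ and the iterated limits cannot split. Making this precise --- exploiting joint continuity of $*$ to move limits through a single coordinate while pseudocompactness supplies and pins down the needed cluster points --- is the technical core of Reznichenko's argument. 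Once the interchange identity is secured, every pointwise cluster point of a sequence of translates is continuous, Grothendieck's theorem yields relative weak compactness of $Hf$, so $f$ is $\WAP$; as $f\in C^*(S)$ was arbitrary, $C^*(S)=\WAP(S)$, giving $\beta S=\WAP(S)$ with its separately continuous operation, and the theorem follows.
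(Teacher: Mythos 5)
You should first know that the paper itself contains no proof of this statement: Section~2 is explicitly a survey, and Theorem~\ref{rez2} is quoted from Reznichenko \cite{Rez}. So your proposal can only be measured against Reznichenko's own argument, and there it falls short of being a proof. Your soft reductions are correct and standard: by the general theory of semigroup compactifications \cite{BJM}, $\WAP(S)$ is the spectrum of the algebra of weakly almost periodic functions and always carries a separately continuous multiplication, so the theorem is indeed equivalent to the assertion that every $f\in C^*(S)$ is WAP; and since the orbit of $f$ sits in $C^*(S)\cong C(\beta S)$ with $S$ dense in the compact space $\beta S$, Grothendieck's double-limit criterion in its \emph{compact} form (which may be tested with sequences from the dense set $S$) reduces WAP-ness of $f$ exactly to your interchange identity for $F(x,y)=f(x*y)$. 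One caveat on how you get there: the ``theorem of Grothendieck on pseudocompact spaces'' you invoke is not the classical theorem --- Grothendieck's result requires countable compactness of the underlying space, and whether bounded pointwise relatively countably compact subsets of $C^*(X)$ are weakly relatively compact for pseudocompact $X$ is precisely the delicate $g$-space territory treated in \cite[Ch.~III]{Arh}, not a citable fact. Fortunately this detour is avoidable, as just indicated, so it is not the fatal defect.

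The fatal defect is that the interchange identity is never proved, and the entire content of the theorem is concentrated there; your own text concedes this (``the technical core of Reznichenko's argument''). The sketched route --- cluster points $p,q\in\beta S$, extended translations $L_{x_n}\colon\beta S\to\beta S$, an appeal to $G_\delta$-density of the pseudocompact $S$ in $\beta S$ --- correctly \emph{diagnoses} the circularity: continuity of $x\mapsto \widehat f\bigl(L_x(q)\bigr)$ as $x$ runs out to $p$ is exactly the separate continuity of the extended operation that is being established. But diagnosing a circularity is not escaping it, and ``pseudocompactness must be spent a second time'' is a gesture, not an argument; no mechanism is given by which $G_\delta$-density prevents the iterated limits from splitting. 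Reznichenko's actual proof runs through a nontrivial extension lemma of independent interest: every \emph{jointly} continuous real-valued function on a product $X\times Y$ of pseudocompact Tychonoff spaces extends to a \emph{separately} continuous function on $\beta X\times\beta Y$ (this is where joint continuity of $*$ is spent, exactly as you anticipated, and it is not derivable from Glicksberg-type considerations since $S\times S$ need not be pseudocompact). Applying that lemma to $F(x,y)=f(x*y)$ for all $f\in C^*(S)$ yields the separately continuous extension of the multiplication to $\beta S\times\beta S$ directly, with associativity following by density and separate continuity, and the identification $\beta S=\WAP(S)$ as a corollary rather than as the engine. As it stands, your proposal reformulates the theorem correctly and then assumes it at the crux.
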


This theorem combined with the Glicksberg Theorem
\cite[3.12.20(c)]{En} on the Stone-\v Cech compactifications of
products of pseudocompact spaces, implies the following important
result, see \cite[1.3]{BD}.

\begin{thm}
For any Tychonoff topological semigroup $S$ with pseudocompact
square $S\times S$ the semigroup operation of $S$ extends to a
continuous semigroup operation on $\beta S$, which implies that
$\beta S$ coincides with the $\AP$-compactification of $S$.
\end{thm}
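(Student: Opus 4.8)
The plan is to derive the theorem by combining Reznichenko's Theorem~\ref{rez2} with the Glicksberg Theorem, the only real work being the passage from separate to joint continuity of the extended multiplication. First I would note that $S$ is itself pseudocompact: assuming $S\ne\emptyset$, the coordinate projection $S\times S\to S$ is a continuous surjection onto the Tychonoff space $S$, and a continuous image of a pseudocompact space is pseudocompact. Hence Theorem~\ref{rez2} applies and yields a separately continuous multiplication $\odot$ on $\beta S$ extending the operation $*$ of $S$, with respect to which $(\beta S,\eta)$ is the $\WAP$-compactification of $S$, where $\eta:S\hookrightarrow\beta S$ is the canonic homomorphism.

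The central step is to show that $\odot$ is in fact jointly continuous, and this is where I expect the main obstacle to lie. Since the square $S\times S$ is pseudocompact, the Glicksberg Theorem \cite[3.12.20(c)]{En} identifies $\beta(S\times S)$ with $\beta S\times\beta S$ via the canonic homeomorphism. Consequently the continuous map $S\times S\xrightarrow{*}S\hookrightarrow\beta S$ extends, by the universal property of the Stone-\v Cech compactification, to a \emph{jointly} continuous map $m:\beta S\times\beta S\to\beta S$ agreeing with $*$ on $S\times S$. To see that $m$ coincides with $\odot$ I would argue by density in two stages: for each fixed $a\in S$ the maps $b\mapsto a\odot b$ and $b\mapsto m(a,b)$ are continuous in $b$ and agree on the dense set $S$, hence on all of $\beta S$; then for each fixed $b\in\beta S$ the maps $a\mapsto a\odot b$ and $a\mapsto m(a,b)$ are continuous in $a$ and agree on $S$, hence on $\beta S$. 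Thus $\odot=m$ is jointly continuous, and $\beta S$, with its associative multiplication inherited from Theorem~\ref{rez2}, is a compact topological semigroup, i.e.\ a member of the class $\AP$.

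It remains to upgrade the compactification statement. By Theorem~\ref{rez2} the pair $(\beta S,\eta)$ has the universal property of the $\WAP$-compactification: every continuous homomorphism $h:S\to K$ into a compact semitopological semigroup $K$ factors uniquely as $h=\bar h\circ\eta$ through a continuous homomorphism $\bar h:\beta S\to K$. Restricting this property to the targets $K$ lying in the subclass $\AP\subset\WAP$, and using that $\beta S$ has just been shown to belong to $\AP$, we conclude that $(\beta S,\eta)$ satisfies the defining property of the $\AP$-compactification; by the uniqueness of compactifications it therefore coincides with $\AP(S)$. The decisive point throughout is the pseudocompactness of the \emph{square} rather than of $S$ alone, since it is precisely this hypothesis that makes the Glicksberg identification $\beta(S\times S)=\beta S\times\beta S$, and hence the jointly continuous extension, available.
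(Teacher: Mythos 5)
Your proposal is correct and follows exactly the route the paper indicates: the paper gives no detailed proof, merely noting that the theorem follows from Reznichenko's Theorem~\ref{rez2} combined with the Glicksberg Theorem (citing \cite{BD} for details), and your argument---Glicksberg's identification $\beta(S\times S)=\beta S\times\beta S$ yielding a jointly continuous Stone--\v Cech extension of the multiplication, identified with Reznichenko's separately continuous extension by a two-stage density argument---is precisely the intended fleshing-out of that derivation. The final step, deducing the $\AP$-universal property from the $\WAP$-universal property once $\beta S$ is known to lie in $\AP$, is also the standard and correct conclusion.
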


Another result of the same spirit involves openly factorizable
spaces with weakly Lindel\"of squares. We recall that a
topological space $X$ is {\em weakly Lindel\"of\/} if each open
cover $\U$ of $X$ contains a countable subcollection $\V\subset \U$
whose union $\bigcup\V$ is dense in $X$. The following extension
theorem is proved in \cite{BD}.

\begin{thm}\label{ap}
For any Tychonoff openly factorizable topological semigroup $S$
with weakly Lindel\"of square $S\times S$ the semigroup operation
of $S$  extends to a continuous semigroup operation on $\beta S$,
which implies that $\beta S$ is an $\AP$-compactification of $S$.
\end{thm}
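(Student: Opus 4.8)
The plan is to produce a jointly continuous extension $\bar\mu\colon\beta S\times\beta S\to\beta S$ of the multiplication $\mu\colon S\times S\to S$; once this is available, associativity of $\bar\mu$ follows by continuity from the density of $S\times S\times S$ in $(\beta S)^3$, so $\beta S$ is a compact topological semigroup, and the $\AP$-universal property drops out of the Stone--\v Cech property: any continuous homomorphism $h\colon S\to K$ into a compact topological semigroup extends to a continuous $\bar h\colon\beta S\to K$, and $\bar h$ is a homomorphism because the identity $\bar h(uv)=\bar h(u)\bar h(v)$ holds on the dense set $S\times S$ while both sides are continuous on $\beta S\times\beta S$ (this is exactly where joint continuity of $\bar\mu$ is used). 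Everything thus reduces to constructing $\bar\mu$. Embedding $\beta S$ into $\IR^{C^*(S)}$, I would define $\bar\mu$ coordinatewise, letting the $f$-th coordinate of $\bar\mu(u,v)$ be the value at $(u,v)$ of the continuous extension of $f\circ\mu$ to $\beta S\times\beta S$; a routine density argument then confirms that the resulting point lies in $\beta S$ and that $\bar\mu$ is continuous. So the whole theorem reduces to the single claim that for every $f\in C^*(S)$ the function $f\circ\mu\in C^*(S\times S)$ extends continuously over $\beta S\times\beta S$.

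By the Stone--Weierstrass theorem the bounded continuous functions on $S\times S$ that extend over $\beta S\times\beta S$ are exactly those in the uniform closure of the algebraic tensor product $C^*(S)\otimes C^*(S)$ (uniform limits of finite sums $\sum_i u_i(x)v_i(y)$), so the claim becomes that $f\circ\mu$ lies in this closure. For a pseudocompact square this is automatic, since Glicksberg's theorem \cite{En} identifies $\beta(S\times S)$ with $\beta S\times\beta S$ and one merely extends the continuous map $\mu\colon S\times S\to\beta S$; and Reznichenko's Theorem~\ref{rez2} already yields a \emph{separately} continuous extension on $\beta S=\WAP(S)$. The substance of the present statement is therefore to obtain \emph{joint} continuity in the absence of a pseudocompact square, and this is where open factorizability and the weak Lindel\"ofness of $S\times S$ must play the role Glicksberg's theorem plays in the easy case.

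For this upgrade, fix $f\in C^*(S)$ and use open factorizability to write $f=g\circ p$ with $p\colon S\to K$ an open continuous surjection onto a separable metrizable $K$ and $g\in C(K)$. Since a metrizable continuous image of a pseudocompact space is compact, $K$ is compact metrizable, and hence $p\times p\colon S\times S\to K\times K$ is an open surjection onto a compact metrizable space. The heart of the argument is to descend the multiplication to $K$, i.e.\ to produce a continuous $m\colon K\times K\to K$ with $p(xy)=m(p(x),p(y))$: here I would exploit the openness of $p\times p$ together with the weak Lindel\"ofness of $S\times S$ to replace the (a priori uncountable) data of $\mu$ on the fibres of $p\times p$ by countably many open sets with dense union, and thereby pass to a common separable metrizable quotient on which $\mu$ factors. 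Granting such an $m$, we get $f\circ\mu=(g\circ m)\circ(p\times p)$ with $g\circ m\in C(K\times K)=\overline{C(K)\otimes C(K)}$ by Stone--Weierstrass (compactness of $K$), so $f\circ\mu\in\overline{C^*(S)\otimes C^*(S)}$ and extends over $\beta S\times\beta S$; coherence of the quotients $K$ as $f$ ranges over $C^*(S)$ then assembles the coordinatewise extensions into the single map $\bar\mu$.

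The main obstacle is precisely this descent: refining a chosen open factorizing map $p$ into, or through, a homomorphism onto a compact metrizable topological semigroup, and doing so uniformly enough that it works for all $f$ simultaneously. That is exactly the point at which both hypotheses are indispensable -- open factorizability to obtain open maps onto metrizable quotients, and weak Lindel\"ofness of $S\times S$ to keep the associated data countable, so that the quotients stay separable metrizable and hence compact. I should flag that my argument makes essential use of pseudocompactness of $S$ (both to invoke Theorem~\ref{rez2} and to force the quotients $K$ to be compact): for a topological semigroup with non-compact metrizable quotients, such as $(\IR,+)$, the operation does not extend to $\beta S$, so I read pseudocompactness as implicit in the intended hypotheses of this extension theorem from \cite{BD}.
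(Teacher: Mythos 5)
First, a point of reference: the paper contains no proof of Theorem~\ref{ap} at all --- it is imported from \cite{BD} --- so your proposal can only be measured against the argument such a theorem requires. Your outer frame is sound: the reduction of the whole theorem to showing that each $f\circ\mu$ lies in the uniform closure of $C^*(S)\otimes C^*(S)$, the coordinatewise assembly of $\bar\mu$, and the derivation of associativity and of the $\AP$-universal property by density are all correct and standard (two small remarks: the appeal to Theorem~\ref{rez2} is idle in your scheme, since separate continuity of an extension is never used; and the closing ``coherence of the quotients'' worry is vacuous, because once every $f\circ\mu$ extends, the coordinatewise map automatically lands in $\beta S$ by density and compactness). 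Your flag about pseudocompactness is moreover a genuine catch: under the definition of open factorizability printed in Section~1, $(\IR,+)$ satisfies every stated hypothesis, yet addition admits no jointly continuous extension to $\beta\IR$ --- restricting a putative extension to $\cl_{\beta\IR}\IZ=\beta\IZ$, the cluster set of the sequence $(n)_{n\in\w}$ would be a compact topological group by the classical theorem on monothetic compact semigroups, making $\w^*$ homogeneous, contrary to Frol\'{\i}k's theorem. Tellingly, both applications of Theorem~\ref{ap} in this paper (Theorems~\ref{t4.1}(5a) and \ref{t5.1}(4)) invoke it only for pseudocompact separable $S$, so your reading that pseudocompactness is implicit in the intended hypotheses matches the paper's own usage.

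That said, there is a genuine gap, and it sits exactly at the theorem's core: the ``descent'' of $\mu$ to a continuous $m:K\times K\to K$ is asserted, not proved. For the $p$ obtained from one application of open factorizability to a single $f$, the map $m$ is not even well defined: nothing forces $p(x)=p(x')$ and $p(y)=p(y')$ to imply $p(xy)=p(x'y')$, so $f\circ\mu$ need not factor through $p\times p$ at all. What is needed is an $\w$-step closing-off construction: a sequence of open maps $p_n:S\to K_n$ onto second-countable (compact, by pseudocompactness) spaces, each refining its predecessor and absorbing its multiplicative data --- weak Lindel\"ofness of $S\times S$ is what keeps each absorption step countable (one covers sets of the form $\mu^{-1}\bigl(p_n^{-1}(W)\bigr)$ by countably many open boxes with dense union), open factorizability is what allows the diagonal of the maps built so far to be re-factored through an \emph{open} map onto a second-countable space at each stage, and the openness of the limit map together with the density of the chosen boxes is what yields both well-definedness and continuity of the induced operation on the limit quotient, which must then be checked to be a topological semigroup through which all the relevant $f$ factor. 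This construction is the actual substance of the theorem proved in \cite{BD}, and in your text it is compressed into ``here I would exploit the openness of $p\times p$ together with the weak Lindel\"ofness'' and ``Granting such an $m$''. Everything before and after that point is routine; as it stands, the proposal is a correct frame around an unproven central lemma, not a proof.
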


The following theorem also is proved in \cite{BD}. It gives
conditions on a pseudocompact topological semigroup $S$ under which its Stone-\v Cech compactification $\beta S$
coincides with the $\SAP$-compactification \allowbreak $\SAP(S)$ of $S$.

\begin{thm}\label{sap} For a Tychonoff pseudocompact topological semigroup $S$ the Stone-\v Cech compactification $\beta S$ is a compact topological group  provided that one of the following conditions holds:
\begin{enumerate}
\item $S$ contains a totally bounded topological group as a dense subgroup;
\item $S$ contains a dense subgroup and $S\times S$ is pseudocompact.
\end{enumerate}
\end{thm}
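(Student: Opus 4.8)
The plan is to turn $\beta S$ into a compact topological group in two stages: first equip it with a separately continuous multiplication, then show this multiplication makes it a group. Since $S$ is Tychonoff and pseudocompact, Theorem~\ref{rez2} extends the multiplication of $S$ to a separately continuous operation on $\beta S$, so that $\beta S$ is a compact semitopological semigroup; under hypothesis~(2), where $S\times S$ is pseudocompact, the extension theorem stated above (the one obtained from Reznichenko's theorem and the Glicksberg theorem) gives a \emph{jointly} continuous multiplication, so that $\beta S$ is even a compact topological semigroup. In either case it then suffices, by the classical theorem of Ellis that a compact semitopological semigroup which is algebraically a group is automatically a topological group, to check that $\beta S$ is a group. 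As a first step, writing $e$ for the neutral element of the dense subgroup $G$, the translations $x\mapsto ex$ and $x\mapsto xe$ are continuous on $\beta S$ and agree with the identity on the dense set $G$, hence everywhere; thus $e$ is a two-sided identity of $\beta S$ and every element of $G$ is invertible in $\beta S$.

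Under hypothesis~(2) invertibility of an arbitrary $x\in\beta S$ is immediate. Pick a net $(g_\alpha)$ in $G$ with $g_\alpha\to x$ and, by compactness, pass to a subnet with $g_\alpha^{-1}\to y$. Joint continuity of the multiplication then yields $e=g_\alpha g_\alpha^{-1}\to xy$ and $e=g_\alpha^{-1}g_\alpha\to yx$, so $xy=yx=e$ and $\beta S$ is a group; Ellis' theorem completes this case.

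Hypothesis~(1) is the delicate one, since without a pseudocompact square the multiplication on $\beta S$ is only separately continuous and the net computation above collapses. Here the total boundedness of $G$ is decisive: it guarantees that the two-sided (Raikov) completion $\hat G$ of $G$ is a \emph{compact} topological group in which $G$ sits densely. The plan is to identify $\beta S$ with $\hat G$ by comparing the two totally bounded uniformities carried by $G$, namely the two-sided group uniformity $\mathcal{U}_2$ (whose completion is $\hat G$) and the uniformity $\mathcal{U}_{\beta S}$ induced from the compact space $\beta S$. It is enough to prove that $\mathcal{U}_{\beta S}$ refines $\mathcal{U}_2$, equivalently that the inclusion $G\hookrightarrow\hat G$ extends to a continuous map $\beta S\to\hat G$ that is the identity on $G$. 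Granting this, every net $g_\alpha\to x$ in $\beta S$ is $\mathcal{U}_2$-Cauchy; since inversion is $\mathcal{U}_2$-uniformly continuous, the inverses $g_\alpha^{-1}$ are $\mathcal{U}_2$-Cauchy as well, and for a cluster point $y$ of $(g_\alpha^{-1})$ the separate continuity lets one write $xy=\lim_\beta\lim_\alpha g_\beta g_\alpha^{-1}$; the Cauchy condition forces $g_\beta g_\alpha^{-1}\to e$, whence $xy=e$, and symmetrically $yx=e$. Thus $\beta S$ is again a group and Ellis' theorem applies.

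The main obstacle is exactly this uniformity comparison: showing that a net in $G$ converging in $\beta S$ is Cauchy for the two-sided group uniformity, i.e.\ extending $G\hookrightarrow\hat G$ over the dense subsemigroup $S\subset\beta S$ when all one has on $\beta S$ is separate continuity. The way I would attack it uses that a totally bounded group is maximally almost periodic, so that the finite-dimensional unitary representations coming from the compact group $\hat G$ separate the points of $G$ and are $\mathcal{U}_2$-uniformly continuous. I would extend these representations one at a time to continuous functions on $\beta S$, using the separate continuity of the extended operation together with the pseudocompactness of $S$ to ensure that the required limits exist and are single-valued; assembling the extensions produces the map $\beta S\to\hat G$, identifies $\beta S$ with the compact group $\hat G$, and thereby settles hypothesis~(1).
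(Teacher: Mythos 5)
Your case (2) is correct and is the expected argument: Glicksberg's theorem together with Theorem~\ref{rez2} makes the extended multiplication on $\beta S$ jointly continuous, the net computation with $g_\alpha\to x$ and $g_\alpha^{-1}\to y$ shows $\beta S$ is algebraically a group, and Ellis' theorem finishes. (For the record, the paper does not prove Theorem~\ref{sap} at all --- it quotes it from \cite{BD} --- so your argument has to stand on its own.) The genuine gap is in case (1), at the sentence ``it is enough to prove that $\mathcal{U}_{\beta S}$ refines $\mathcal{U}_2$''. That refinement is \emph{not} enough. It produces a continuous map $f\colon\beta S\to\hat G$ fixing $G$ pointwise (by density and separate continuity, $f$ is even a surjective homomorphism), but your iterated limit $xy=\lim_\beta\lim_\alpha g_\beta g_\alpha^{-1}$ is computed in $\beta S$, whereas the Cauchy condition makes the products $g_\beta g_\alpha^{-1}$ small only with respect to $\mathcal{U}_2$, i.e.\ in $\hat G$. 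Since the trace of the $\beta S$-uniformity on $G$ is a priori \emph{strictly finer} than $\mathcal{U}_2$, $\mathcal{U}_2$-smallness of $g_\beta g_\alpha^{-1}$ says nothing about convergence to $e$ in $\beta S$: all your computation yields is $f(xy)=f(x)f(x)^{-1}=e$, i.e.\ $xy\in f^{-1}(e)$, and nothing rules out a nontrivial kernel (the compact semitopological semigroup $\IR\cup\{\infty\}$ with its dense subgroup $\IR$ shows that separate continuity alone tolerates exactly this degeneration). To conclude $xy=e$ you would need $f$ injective, i.e.\ the two uniformities to \emph{coincide} on $G$, and the missing inclusion --- every bounded continuous function on $S$ is $\mathcal{U}_2$-uniformly continuous on $G$ --- is precisely the Comfort--Ross-type heart of the matter, the only place where pseudocompactness of $S$ can act. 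Your closing paragraph never addresses it; moreover, even for the direction you chose, the construction is left to the incantation ``pseudocompactness \dots\ ensures that the required limits exist and are single-valued'', which is the entire difficulty, since a function defined on the dense subgroup $G$ need not extend continuously over $S$ at all.

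Ironically, the refinement you did not consider suffices by itself. If $\mathcal{U}_2$ refines the trace of the $\beta S$-uniformity on $G$, then the identity map of $G$ extends to a continuous map $g\colon\hat G\to\beta S$ with dense, hence full, image; the usual two-step density argument (first translate by elements of $G$, then use joint continuity on $\hat G$ and separate continuity on $\beta S$) shows $g$ is a homomorphism, so $\beta S$, being a continuous homomorphic image of a group, is algebraically a group; Ellis' theorem then upgrades the compact semitopological group $\beta S$ to a topological group, and a posteriori $g$ is a topological isomorphism, recovering $\beta S\cong\hat G$. So the correct repair of your plan is to prove, using the pseudocompactness of $S$ together with the total boundedness of $G$, that every $f\in C(\beta S)$ restricted to $G$ is uniformly continuous for the group uniformity, and then to run this quotient argument instead of the iterated-limit computation.
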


\section{Attaching a discrete semigroup to a semitopological semigroup}

In this section we extend the construction of the space $D\cup_\pi
M$ to the category of semitopological semigroups and their
continuous homomorphisms.

Given a homomorphism $\pi:D\to M$ from a discrete semigroup $D$
into a semitopological semigroup $M$ let us extend the semigroup
operations from $(D,\cdot)$ and $(M,\cdot)$ to $D\cup_\pi M$
by letting
 $$
xy=\begin{cases}
x\cdot y&\mbox{if $x,y\in D$ or $x,y\in M$,}\\
\pi(x)\cdot y&\mbox{if $x\in D$ and $y\in M$,}\\
x\cdot \pi(y)&\mbox{if $x\in M$, $y\in D$}.
\end{cases}
$$
Endowed with the so-extended operation, the space $S=D\cup_\pi M$
becomes a semitopological semigroup containing $D$ as
a subsemigroups and $M$ as a two-sided ideal. Moreover, the map $\bar\pi=\pi\cup\id_M:D\cup_\pi M\to M$ is a continuous semigroup homomorphism.

Now we will find some conditions guaranteeing that $S=D\cup_\pi M$
is a topological semigroup.

\begin{defn}
A homomorphism $\pi:D\to M$ is called {\em finitely resolvable} if
for every $a,b\in M$ and $c\in D$ the set $\{(x,y)\in D\times
D:\pi(x)=a,\;\pi(y)=b,\; xy=c\}$ is finite.
\end{defn}

Observe that each one-to-one homomorphism is finitely resolvable.

\begin{thm}\label{t3.2}
Let $\pi:D\to M$ be a homomorphism from a discrete semigroup $D$ to a
topological semigroup $M$. For the semitopological semigroup
$S=D\cup_\pi M$ the following conditions are equivalent:
\begin{enumerate}
\item $S$ is a topological semigroup;
\item for each $c\in D$ the set $D_c=\{(x,y)\in D\times D:xy=c\}$ is closed in $S\times S$.
\end{enumerate}
If the homomorphism $\pi$ is finitely resolvable, then the conditions (1),(2) are equivalent to (3) and follow from (4):
\begin{enumerate}
\item[(3)] For each $c\in D$ the set $\pi^2(D_c)=\{(\pi(x),\pi(y)):(x,y)\in D_c\}$ is closed and discrete in $M\times M$.
\item[(4)] the subspace $\pi(D)$ is discrete in $M$ and the complement $M\setminus \pi(D)$ is a two-sided ideal in $M$.
\end{enumerate}
\end{thm}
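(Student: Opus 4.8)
The plan is to analyze the joint continuity of the multiplication $\mu:S\times S\to S$ directly, using the description of the topology of $S=D\cup_\pi M$: the set $D$ is open and discrete, the basic neighborhoods of a point $m\in M$ have the form $\bar\pi^{-1}(O)\setminus F$ with $O$ open in $M$ and $F\subseteq D$ finite, and a product $xy$ lies in $D$ only when both $x,y\in D$ (because $M$ is a two-sided ideal). First I would record the key reduction: since $\bar\pi:S\to M$ is a continuous homomorphism and $M$ is a topological semigroup, the composition $\bar\pi\circ\mu$ coincides with $(\bar\pi\times\bar\pi)$ followed by the multiplication $\mu_M$ of $M$, hence is jointly continuous. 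Thus the $\bar\pi^{-1}(O)$ part of any basic neighborhood of $\mu(a,b)$ causes no trouble, and continuity of $\mu$ at a point $(a,b)$ with a coordinate in $M$ (so that $ab\in M$) can fail only because of the finite exceptional set $F$.

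For (1)$\Leftrightarrow$(2), continuity at points of $D\times D$ is automatic since they are isolated in $S\times S$. At a point $(a,b)$ with a coordinate in $M$ I would show that $\mu(U\times V)\subseteq\bar\pi^{-1}(O)\setminus F$ can be achieved for suitable neighborhoods $U,V$ precisely when $(a,b)\notin\overline{D_c}$ for every $c\in F$: the inclusion into $\bar\pi^{-1}(O)$ comes from the previous paragraph, while a value $xy=c\in F\subseteq D$ forces $(x,y)\in D_c$, so avoiding $F$ amounts to separating $(a,b)$ from each $D_c$. Running this over all basic neighborhoods shows that $\mu$ is continuous at $(a,b)$ iff $(a,b)$ is not an accumulation point of any $D_c$; conversely, if $(a,b)$ is such an accumulation point, the neighborhood $S\setminus\{c\}$ witnesses discontinuity. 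Since $D_c\subseteq D\times D$ is discrete and $D\times D$ is open, the only possible accumulation points of $D_c$ have a coordinate in $M$, so $D_c$ is closed iff it has no such accumulation point. Matching the two descriptions gives (1)$\Leftrightarrow$(2).

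For the implication (2)$\Rightarrow$(3) I would argue by contraposition and exploit the compactness of $\alpha D$. Given an accumulation point $(a,b)\in M\times M$ of $\pi^2(D_c)$, I choose for each neighborhood $W$ of $(a,b)$ a pair $(x_W,y_W)\in D_c$ with $(\pi(x_W),\pi(y_W))\in W\setminus\{(a,b)\}$. Viewing this net through the embedding $S\times S\hookrightarrow(\alpha D)^2\times M^2$ and using compactness of $(\alpha D)^2$, I would pass to a subnet along which $x_W\to\xi$ and $y_W\to\eta$ in $\alpha D$, while $\pi(x_W)\to a$ and $\pi(y_W)\to b$ in $M$ by the choice of the net; the resulting limit lies in $S\times S$ (if $\xi\in D$ it is isolated, forcing $x_W$ eventually constant and $a=\pi(\xi)$, and similarly for $\eta$). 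One cannot have both $\xi,\eta\in D$, for then the net would be eventually equal to $(\xi,\eta)$ and $(\pi(\xi),\pi(\eta))$ would equal $(a,b)$, contrary to $(\pi(x_W),\pi(y_W))\neq(a,b)$. Hence the limit has a coordinate in $M$, so it is an accumulation point of $D_c$ outside $D\times D$, and $D_c$ is not closed. Notably this direction uses no hypothesis on $\pi$.

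The reverse implication (3)$\Rightarrow$(2) is where finite resolvability enters, and I expect it to be the main technical point. Arguing again by contraposition, I suppose some $D_c$ has an accumulation point $(a,b)$ with a coordinate in $M$, split according to whether each coordinate lies in $D$ or in $M$, and push the point forward by $\bar\pi\times\bar\pi$ to produce an accumulation point of $\pi^2(D_c)$ at $(\bar\pi(a),\bar\pi(b))$. In each case the danger is that all nearby pairs of $D_c$ map onto the single point $(\bar\pi(a),\bar\pi(b))$; finite resolvability rules this out, since it makes the relevant fibre $\{(x,y)\in D_c:\pi(x)=\bar\pi(a),\,\pi(y)=\bar\pi(b)\}$ finite, and deleting the corresponding finitely many coordinates via the sets $F$ in the neighborhood base leaves points of $\pi^2(D_c)$ distinct from $(\bar\pi(a),\bar\pi(b))$ in every neighborhood. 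Finally, for (4)$\Rightarrow$(3) I would note that $\pi^2(D_c)\subseteq\pi(D)\times\pi(D)$, which is discrete as soon as $\pi(D)$ is discrete, so $\pi^2(D_c)$ is automatically discrete; for closedness I take a putative accumulation point $(a,b)$, necessarily with a coordinate outside $\pi(D)$ and hence in the ideal $M\setminus\pi(D)$, and use joint continuity of multiplication in $M$ together with $\pi(x)\pi(y)=\pi(c)$ to obtain $ab=\pi(c)$; but $ab$ lies in the ideal $M\setminus\pi(D)$ while $\pi(c)\in\pi(D)$, a contradiction.
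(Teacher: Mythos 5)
Your proposal is correct, and for most of the implications it runs parallel to the paper's own proof. The equivalence $(1)\Leftrightarrow(2)$ is the same neighborhood bookkeeping: your observation that $\bar\pi\circ\mu=\mu_M\circ(\bar\pi\times\bar\pi)$ is jointly continuous is exactly the paper's step of choosing $U(x)\cdot U(y)\subset U(z)$ and using that $\bar\pi$ is a homomorphism, and both arguments then use closedness of $D_F=\bigcup_{c\in F}D_c$ to steer clear of the finite exceptional set $F$; likewise your treatment of points of $D\times D$ as isolated matches the paper's appeal to shifts. Your $(3)\Rightarrow(2)$ and $(4)\Rightarrow(3)$ also coincide with the paper's up to reorganization: where the paper derives a contradiction by showing the fibre $W=\{(x,y)\in D_c:\pi(x)=\bar\pi(a),\;\pi(y)=\bar\pi(b)\}$ would have to be infinite, you use its finiteness positively, deleting its finitely many coordinates via the sets $F$ to produce points of $\pi^2(D_c)$ distinct from the pushforward point in every neighborhood --- the same mechanism. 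The one genuine divergence is $(2)\Rightarrow(3)$. The paper argues that an accumulation point $(a,b)\in M\times M$ of $\pi^2(D_c)$ is \emph{itself} an accumulation point of $D_c$, picking a pair $(x,y)\in D_c\setminus F^2$ with image in $U(a)\times U(b)$; as written this is slightly too quick, since $D_c\setminus F^2$ only excludes pairs with \emph{both} coordinates in $F$, and if, say, all pairs of $D_c$ with image near $(a,b)$ share one fixed coordinate $x_0\in F$ with $\pi(x_0)=a$, then $(a,b)$ need not be an accumulation point of $D_c$ at all. Your subnet argument through the compact space $(\alpha D)^2$ handles exactly this case: the limit $(\xi,\eta)$ may then be a mixed point such as $(x_0,b)\in D\times M$, which still lies in $\overline{D_c}\setminus D_c$ and so still contradicts closedness. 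Thus your route buys a strictly more careful (if less elementary, via nets and compactness of $\alpha D$) proof of $(2)\Rightarrow(3)$, while the paper's version is shorter but glosses over the mixed-coordinate scenario; you are also right that this implication uses no hypothesis on $\pi$, in agreement with where the paper first invokes finite resolvability.
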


\begin{proof}
$(1)\Ra(2)$ Assuming that $S$ is a topological semigroup, we need to check that for every $c\in D$ the set $D_c=\{(x,y)\in D\times D:xy=c\}$ is closed  in $S\times S$. Assuming the converse, we could find an accumulation
point $(a,b)\in S\times S$ for the set $D_c$. Since $D_c\subset D\times D$ is discrete, either 
$a\in M$ or $b\in M$ and hence $ab\in M$. However $ab=c$ by the continuity of the semigroup operation on $S$, which is a contradiction as $c\notin M$.
\smallskip

$(2)\Ra(1)$ Assume that for each $c\in D$ the set $D_c$ is closed
 in $S\times S$. We need to check the continuity of
the multiplication at each pair $(x,y)\in S\times S$. If $x$ or
$y$ belongs to $D$, then this follows from the continuity of left
and right shifts on $S$. So, we can assume that $x,y\in M$. Let
$z=xy$ and $O(z)\subset S$ be an open neighborhood of $z$. It
follows from the definition of the topology of $S=D\cup_\pi M$
that there are a neighborhood $U(z)\subset M$ and a finite subset
$F\subset D$ such that $\bar\pi^{-1}(U(z))\setminus F\subset
O(z)$. By the continuity of the semigroup operation on $M$ the
points $x,y$ have neighborhoods $U(x),U(y)\subset M$ such that
$U(x)\cdot U(y)\subset U(z)$.

Since the set $D_F=\bigcup_{c\in F}D_c$ is closed in
$S\times S$ and $(x,y)\notin D_F$ (because $(x,y)\in M\times M$), we
can find neighborhoods $O(x)\subset\bar\pi^{-1}(U(x))$ and
$O(y)\subset \bar\pi^{-1}(U(y))$ of the points $x,y$ such that the
set $O(x)\times O(y)$ is disjoint from the set $D_F$. In this case
$O(x)\cdot O(y)\subset S\setminus F$ and $O(x)\cdot
O(y)\subset\bar\pi^{-1}(U(x))\cdot\bar\pi^{-1}(U(y))\subset\bar\pi^{-1}(U(z))$,
which implies $O(x)\cdot O(y)\subset \bar\pi^{-1}(U(z))\setminus
F\subset O(z)$.
\smallskip

$(2)\Ra(3)$ Assume that for some $c\in D$ the set $D_c$ is closed  in $S\times S$. We shall show that its image $\pi^2(D_c)=\{(\pi(x),\pi(y)):(x,y)\in D_c\}$ is closed and discrete in $M\times M$. Assuming the converse, we could find an accumulation point $(a,b)\in M\times M$ of $\pi^2(D_c)$. We claim that $(a,b)$ is an accumulation point of the set $D_c$. Fix any neighborhoods $O(a)$ and $O(b)$ of the points $a$ and $b$ in $S$, respectively. By the definition of the topology of $D\cup_\pi M$, there are neighborhoods $U(a)$ and $U(b)$ of those points in $M$ and a finite subset $F\subset D$ such that $O(a)\supset\bar\pi^{-1}(U(a))\setminus F$ and $O(b)\supset \bar\pi^{-1}(U(b))\setminus F$. Since $(a,b)$ is an accumulation point of the set $\pi^2(D_c)$, there is a pair $(x,y)\in D_c\setminus F^2$ such that $(\pi(x),\pi(y))\in U(a)\times U(b)$. This pair $(x,y)$ belongs to the neighborhood $O(a)\times O(b)$, witnessing that $(a,b)$ is an accumulation point of the set $D_c$. Since $(a,b)\notin D_c$, the set $D_c$ is not closed in $S\times S$.
\smallskip

From now on we assume that the homomorphism $\pi$ is finitely resolvable.
\smallskip

$(3)\Ra(2)$ Assume that for some $c\in D$ the set $\pi^2(D_c)$ is closed and discrete in $M\times M$. We need to check that the set $D_c$ is closed  in $S\times S$. In the opposite case this set has an accumulation point $(a,b)$ in $S\times S$. The continuity of the retraction $\bar\pi$ implies that the pair $(\pi(a),\pi(b))$ lies in the closure of the set $\pi^2(D_c)$ and hence is an isolated point of $\pi^2(D_c)$. Then the set $W=\{(x,y)\in D_c:\pi(x)=\pi(a),\;\pi(y)=\pi(b)\}$ is infinite because $(a,b)$ is an accumulation point of $D_c$. On the other hand, the set $W$ is finite by the finite resolvability of the homomorphism $\pi$.
\smallskip

$(4)\Ra(3)$ Assume that $\pi(D)$ is discrete and $S\setminus \pi(D)$ is a two-sided ideal in $M$. Given any $c\in D$, we need to check that the subspace $\pi^2(D_c)$ is closed and discrete in $M\times M$. The space $\pi^2(D_c)$ is discrete because so is the space $\pi(D)\times\pi(D)\supset\pi^2(D_c)$. If the set $\pi^2(D_c)$ is not closed in $M\times M$, then it has an accumulation point $(a,b)\in M\times M$, which does not belongs to $\pi(D)\times \pi(D)$ as the latter space is discrete. Since $M\setminus \pi(D)$ is a two-sided ideal in $M$, $ab \notin \pi(D)$. On the other hand, by the continuity of the homomorphism $\bar\pi$, we get $\bar\pi(ab)=\pi(c)\in\pi(D_c)$ and this is a required contradiction.
\end{proof}

\begin{cor}\label{c3.3}  Let $D$ be a semigroup such that for some $c\in D$ the set $D_c=\{(x,y)\in D\times D:xy=c\}$ is infinite and let $\pi:D\to M$ be a homomorphism into a topological semigroup $M$. If $S=D\cup_\pi M$ is a topological semigroup, then $D_c$ is an open-and-closed discrete subspace of $S\times S$ and hence $S\times S$ is not pseudocompact.
\end{cor}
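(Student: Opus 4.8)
The plan is to show that $D_c$ is simultaneously open and closed in $S\times S$, and then exploit its infinite discreteness to produce a witness against pseudocompactness.

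First I would recall from the construction of $D\cup_\pi M$ that $D$ is identified with an \emph{open discrete} subspace of $S=D\cup_\pi M$. Consequently each point of $D$ is isolated in $S$, so each point of $D\times D$ is isolated in $S\times S$; that is, $D\times D$ is an open discrete subspace of $S\times S$. Since $D_c\subseteq D\times D$ consists of isolated points of $S\times S$, the set $D_c$ is open in $S\times S$ and is discrete as a subspace. Closedness then comes for free from Theorem~\ref{t3.2}: because $S=D\cup_\pi M$ is a topological semigroup, the implication $(1)\Ra(2)$ gives that $D_c$ is closed in $S\times S$. Combining these observations, $D_c$ is an open-and-closed infinite discrete subspace of $S\times S$, which is the first assertion.

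To see that $S\times S$ is not pseudocompact, I would enumerate $D_c=\{(x_n,y_n):n\in\w\}$ (possible since $D_c$ is infinite by hypothesis) and consider the family
$$\U=\{(S\times S)\setminus D_c\}\cup\{\{(x_n,y_n)\}:n\in\w\}.$$
Each member is open --- the first because $D_c$ is closed, each singleton because its point is isolated --- and the family covers $S\times S$. It is locally finite: a point outside $D_c$ has the neighborhood $(S\times S)\setminus D_c$ meeting only one member, while a point $(x_m,y_m)\in D_c$ has the neighborhood $\{(x_m,y_m)\}$ meeting only one member. Thus $\U$ is an infinite locally finite open cover of $S\times S$, which contradicts pseudocompactness in the sense of the definition used above. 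Alternatively, one may define $f\colon S\times S\to\IR$ by $f(x_n,y_n)=n$ and $f\equiv 0$ off $D_c$; this $f$ is locally constant on the open discrete set $D_c$ and vanishes on the open complement, hence is continuous and unbounded.

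The argument involves no genuine obstacle: once $D_c$ is seen to be open (from the discreteness of $D$) and closed (from Theorem~\ref{t3.2}), the failure of pseudocompactness is immediate. The only point requiring a moment's care is the verification of local finiteness of $\U$, which rests precisely on the two facts that the points of $D_c$ are isolated in $S\times S$ and that $D_c$ itself is closed.
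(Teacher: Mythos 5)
Your proof is correct and is exactly the argument the paper intends for this corollary (which it leaves unproved): openness and discreteness of $D_c$ come from the identification of $D$ with an open discrete subspace of $S$, closedness from the implication $(1)\Rightarrow(2)$ of Theorem~\ref{t3.2}, and an infinite open-and-closed discrete subspace yields an infinite locally finite open cover, contradicting pseudocompactness --- the same reasoning the authors spell out later in the proof of Theorem~\ref{t5.1}(2),(3). Your verification of local finiteness and the alternative unbounded-function argument are both sound, so nothing is missing.
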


\section{Attaching the bicyclic semigroup to a topological semigroup}\label{s4}

In this section we study the structure of the semigroups
$D\cup_\pi M$ in the case  $D=\C(p,q)$ is the bicyclic semigroup.
The bicyclic group plays an important role in the structure theory
of semigroups, see \cite{CP}. A remarkable property of this
semigroup is that it is non-topologizable in the sense that any
Hausdorff topology turning $\C(p,q)$ into a topological semigroup
is discrete \cite{ES}.

The bicyclic semigroup $\C(p,q)$ is  generated by two element
$p,q$ and one relation $qp=1$, see \cite{CP}. It follows that each
element of $\C(p,q)$ can be uniquely written as the product
$p^nq^m$ for some $n,m\in\w$. The element $1=p^0q^0$ is a
two-sided unit for $\C(p,q)$. The product $p^mq^n\cdot p^iq^j$ of
two elements of the bicyclic semigroup $\C(p,q)$  is equal to
$p^mq^{n-i+j}$ if $n\ge i$ and to $p^{m+i-n}q^j$ if $n\le i$. The
semigroup $E_C=\{p^nq^n:n\in\w\}$ of the idempotents of $\C(p,q)$ is
isomorphic to the semigroup $\w$ of finite ordinals endowed with
the operation of maximum.

If $\pi:\C(p,q)\to H$ is any homomorphism of $\C(p,q)$ into a
group, then $\pi(1)$ is the identity element $e$ of the group $H$ and the
relation $qp=1$ implies that $\pi(q)$ and $\pi(p)$ are mutually
inverse elements of $H$, generating a cyclic subgroup of $H$, see \cite[1.32]{CP}. If the image $\pi(\C(p,q))$ is infinite, then it is easy to check that the homomorphism $\pi:\C(p,q)\to H$ is finitely resolvable.

\begin{thm}\label{t4.1}
Let $\pi:\C(p,q)\to M$ be a homomorphism of the bicyclic semigroup
into a topological semigroup $M$ such that $Z=\pi(\C(p,q))$ is a dense
infinite cyclic subgroup of $M$ and $M$ is countably compact at
$Z$. The semitopological semigroup $S=\C(p,q)\cup_\pi M$ has the
following properties:
\begin{enumerate}
\item if $M$ is countably compact, then so is $\C(p,q)\cup_\pi M$.
\item $S$ is a topological semigroup iff for every $c\in\C(p,q)$ the set
$D_c=\{(x,y)\in\C(p,q)^2:xy=c\}$ is closed in $S\times S$ iff the subsemigroup $\pi^2(D_1)=\{(\pi(q^n),\pi(p^n)):n\in\w\}$ is closed and discrete in $M\times M$.
\item $S$ is a topological semigroup provided the subgroup $Z$ is discrete in $M$ and $M\setminus Z$ is an ideal in $M$.
\item If $S$ is a topological semigroup, then the square $S\times S$  is not pseudocompact.
\item If $S$ is a topological semigroup and the space $M$ is Tychonoff, then 
\begin{enumerate}
\item $M$ is not openly factorizable,  
\item $M\times M$ is not pseudocompact, 
\item $M$ contains no dense totally bounded topological subgroup.
\end{enumerate}
\end{enumerate}
\end{thm}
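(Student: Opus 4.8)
I would dispatch (1), (3), (4) directly from the general machinery. For (1), apply Proposition~\ref{p1.4} with $\kappa=1$: if $M=M^{1}$ is countably compact then so is $(\C(p,q)\cup_\pi M)^{1}$. For (4), note that $D_1=\{(q^n,p^n):n\in\w\}$ is infinite and apply Corollary~\ref{c3.3}. For (3), apply the implication $(4)\Ra(1)$ of Theorem~\ref{t3.2}; this is legitimate because $\pi(\C(p,q))=Z$ is infinite, so $\pi$ is finitely resolvable, and the hypotheses of (3) are exactly condition (4) there with $\pi(\C(p,q))=Z$.

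For (2), the first equivalence is $(1)\Leftrightarrow(2)$ of Theorem~\ref{t3.2}, and finite resolvability makes these equivalent to closedness and discreteness of $\pi^2(D_c)$ in $M\times M$ for \emph{every} $c$. The remaining task is to collapse ``every $c$'' to ``$c=1$''. Writing $t=\pi(p)$, so that $\pi(p^kq^l)=t^{k-l}$, a short computation of $D_c$ for $c=p^aq^b$ yields $\pi^2(D_c)=\{(t^{a-k},t^{k-b}):k\in\w\}$, while $\pi^2(D_1)=\{(t^{-n},t^n):n\in\w\}$. Since $Z$ is dense and the operation is separately continuous, $e=\pi(1)$ is a two-sided identity of all of $M$; hence for $g\in Z$ the left shift $L_g\colon M\to M$ is a homeomorphism with inverse $L_{g^{-1}}$. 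The product homeomorphism $L_{t^{-a}}\times L_{t^{b}}$ of $M\times M$ then carries $\pi^2(D_c)$ onto $\pi^2(D_1)$, so the two sets are simultaneously closed-and-discrete or not, which proves (2).

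For (5) I would first extract two consequences of the standing hypotheses. As $Z$ is a countable dense subgroup, $M$ and $S$ and their squares are separable, hence weakly Lindel\"of; and as $M$ is countably compact at the dense set $Z$ it is countably pracompact, hence pseudocompact. The crux is the claim: \emph{if the multiplication of $M$ extends to a compact topological group structure on $\beta M$, then $S$ is not a topological semigroup}. Indeed, $\beta M=\overline{Z}$ is then a monothetic compact group, so the closed subsemigroup $\overline{\{t^{-n}:n\ge 0\}}$ is a subgroup, hence equals $\beta M$ and in particular $t\in\overline{\{t^{-n}:n\ge0\}}$. Choosing a net $t^{-n_\lambda}\to t$ and using continuity of inversion gives $t^{n_\lambda}\to t^{-1}$, whence $(t^{-n_\lambda},t^{n_\lambda})\to(t,t^{-1})$. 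Since $t$ has infinite order, $(t,t^{-1})\in Z\times Z\subseteq M\times M$ is an accumulation point of $\pi^2(D_1)$ not lying in $\pi^2(D_1)$, so $\pi^2(D_1)$ is not closed in $M\times M$ and (2) shows $S$ is not a topological semigroup.

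Granting this claim, each part of (5) is finished by forcing the relevant compact structure. For (b), Theorem~\ref{sap}(2) applies ($M$ is pseudocompact, contains the dense subgroup $Z$, and $M\times M$ is pseudocompact by hypothesis), so $\beta M$ is a compact topological group and the claim is contradicted. For (c), Theorem~\ref{sap}(1) applies, since $M$ is pseudocompact and, by hypothesis, contains a dense totally bounded topological subgroup; again $\beta M$ is a compact topological group. For (a), Proposition~\ref{p1.8} shows $S$ is openly factorizable and its square is weakly Lindel\"of by separability, so Theorem~\ref{ap} extends the multiplication continuously to $\beta S$; then $\beta S$ is a compact topological semigroup containing $\C(p,q)$, contradicting the Koch--Wallace theorem. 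The main obstacle is the claim of the previous paragraph: the naive accumulation point obtained by pairing an accumulation point $\alpha\in M$ of $\{t^{-n}\}$ with $\alpha^{-1}$ is useless, because $\alpha^{-1}$ need not lie in the semigroup $M$, and monotheticity is precisely what lets one place the accumulation point at $(t,t^{-1})\in Z\times Z\subseteq M\times M$ for free. I would also double-check the routine ingredients: that a closed subsemigroup of a compact group is a subgroup, and that the $\beta M$ of Proposition~\ref{p1.5} is the same Stone--\v Cech compactification that carries the group structure supplied by Theorem~\ref{sap}.
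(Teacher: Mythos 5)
Your proposal is correct, and for most of the theorem it coincides with the paper's argument: items (1), (3), (4) are dispatched from Proposition~\ref{p1.4}, Theorem~\ref{t3.2}$(4)\Ra(1)$ and Corollary~\ref{c3.3} exactly as in the paper, and item (5a) is proved by the identical chain (Proposition~\ref{p1.8}, separability of $S\times S$, Theorem~\ref{ap}, Koch--Wallace). In item (2) your reduction of ``every $c$'' to ``$c=1$'' is the same underlying idea as the paper's (shifts by elements of the dense group $Z$ are homeomorphisms of $M$), but streamlined: the paper writes $\pi^2(D_c)$ as a \emph{finite union} of $Z$-shifts of $\pi^2(D_1)$, whereas you note that in the abelian image these shifts collapse, so the single homeomorphism $L_{t^{-a}}\times L_{t^{b}}$ carries $\pi^2(D_c)$ onto $\pi^2(D_1)$ --- your formula $\pi^2(D_c)=\{(t^{a-k},t^{k-b}):k\in\w\}$ checks out, and this gives both directions of the equivalence at once. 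The genuine divergence is in (5b) and (5c). The paper derives these \emph{from} (5a): Theorem~\ref{sap} makes $\beta M$ a compact topological group, compact groups are Dugundji compact and hence openly factorizable, and Proposition~2.3 of \cite{BD} pushes open factorizability down from $\beta M$ to $M$, contradicting (5a). You instead argue directly that a compact group structure on $\beta M$ is incompatible with $S$ being a topological semigroup: $\beta M=\overline{Z}$ is monothetic, the closed subsemigroup $\overline{\{t^{-n}:n\ge0\}}$ is a subgroup (a classical fact --- Numakura's theorem --- valid since the subsemigroup is closed in a compact topological group), hence contains $t$, so some net $t^{-n_\lambda}\to t$, inversion gives $t^{n_\lambda}\to t^{-1}$, and $(t,t^{-1})\in Z\times Z\subset M\times M$ is an accumulation point of $\pi^2(D_1)$ not in $\pi^2(D_1)$ (membership is excluded because $t$ has infinite order, which also guarantees the net is non-trivial), contradicting item (2); the auxiliary points you flag are indeed fine, since $e=\pi(1)$ is the identity of $\beta M$ so $Z$-inverses agree with group inverses, and $M\times M$ carries the subspace topology of $\beta M\times\beta M$. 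The trade-off: the paper's route recycles the open-factorizability machinery and obtains (b), (c) as corollaries of (a) at the price of two external ingredients (Dugundji compactness of compact groups and Proposition~2.3 of \cite{BD}); your route makes (b) and (c) logically independent of (a), is more elementary and self-contained modulo Theorem~\ref{sap}, and exhibits the concrete witness $(t,t^{-1})$ at which closedness of $\pi^2(D_1)$ fails.
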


\begin{proof} 1. The first item follows from Proposition~\ref{p1.4}.
\smallskip 

2. The second item will follow from Theorem~\ref{t3.2} as soon as we prove that for every $c\in\C(p,q)$ the set $\pi^2(D_c)=\{(\pi(x),\pi(y)):x,y\in\C(p,q),\; xy=c\}$ is closed and discrete in $M\times M$ provided that the subsemigroup $\pi^2(D_1)=\{(\pi(q^n),\pi(p^n)):n\in\w\}$ is closed and discrete in $M\times M$. So we assume that $\pi^2(D_1)$ is closed and discrete in $M\times M$. Taking into account that the cyclic subgroup $Z$ is dense in the topological semigroup $M$, we conclude that the semigroup $M$ is commutative and $e=\pi(1)$ is a two-sided unit of $M$. Moreover, for each $z\in Z$ the left shift $l_z:M\to M$, $l_z:x\mapsto zx$, is a homeomorphism of $M$ with inverse $l_{z^{-1}}$. This implies that for every $a,b\in Z$ the set $(a,b)\cdot \pi^2(D_1)=\{(ax,by):(x,y)\in \pi^2(D_1)\}$ is closed and discrete in $M\times M$. 

 It is easy to check that for every $c=p^iq^j\in\C(p,q)$,
$$D_c=\{(p^kq^{n},p^{i-k+n}q^j):0\le k\le i,\;n\in\w\}\cup\{(p^iq^{j-k+n},p^nq^k):0\le k\le j,\;n\in\w\}
$$ and then
$$\pi^2(D_c)=\Big(\bigcup_{0\le k\le i}(\pi(p^k),\pi(p^{i-k}q^j))\cdot\pi^2(D_1)\Big)\cup\Big(\bigcup_{0\le k\le j}(\pi(p^iq^{j-k}),\pi(q^k))\cdot\pi^2(D_1)\Big)$$
is closed and discrete in $M\times M$ (being the union of finitely many shifts of the closed discrete subspace $\pi^2(D_1)$ of $M\times M)$.
\smallskip

3. The third item follows from the implication $(4)\Ra(1)$ of Theorem~\ref{t3.2}.

4. The fourth item follows from Corollary~\ref{c3.3}.
\smallskip

5. Now assume that $S$ is a topological semigroup and the space $M$ is Tychonoff. Being countably compact at the dense subset $Z$, the space $M$ is pseudocompact.
\smallskip

5a. If the space $M$ is openly factorizable, then so is the space
$S$ according to Proposition~\ref{p1.8}. By Proposition~\ref{p1.7}, the space $S$ is pseudocompact. 
Being separable, the square $S\times S$ is weakly Lindel\"of. 
By Theorem~\ref{ap}, the Stone-\v Cech compactification $\beta S$ is a topological semigroup that contains the bicyclic semigroup $\C(p,q)$, which is forbidden by the theorem of Koch and Wallace \cite{KW}.
\smallskip

5b. Assume that the square $M\times M$ is pseudocompact. Since $M$ contains a dense cyclic subgroup $Z$, by Theorem~\ref{sap}(2), the Stone-\v Cech compactification $\beta M$ is a compact
topological group. Compact topological groups, being Dugundji
compact, are openly factorizable, which implies that $\beta M$ is
openly factorizable. By Propositions 2.3 of \cite{BD}, the open
factorizability of the Stone-\v Cech compactification $\beta M$
implies the open factorizability of the space $M$, which contradicts the preceding item.
\smallskip

5c. Assume that the semigroup $M$ contains a
dense totally bounded topological subgroup. By Theorem~\ref{sap}(1), the Stone-\v Cech compactification $\beta M$
of $M$ is a compact topological group. Further we continue as in the preceding item.
\end{proof}

\section{The structure of topological semigroups that contain bicyclic semigroups}\label{s5}

In fact, many properties of the topological semigroups
$\C(p,q)\cup_\pi M$,  established in Theorem~\ref{t4.1} hold for
any topological semigroup containing a (dense) copy of the
bicyclic semigroup $\C(p,q)$.

\begin{thm}\label{t5.1}
If a topological semigroup $S$ contains the bicyclic semigroup
$\C(p,q)$ as a dense subsemigroup, then
\begin{enumerate}
\item the complement $S\setminus\C(p,q)$ is a two-sided ideal in $S$;
\item for every $c\in\C(p,q)$ the set $D_c=\{(x,y):x,y\in\C(p,q),\;xy=c\}$
is a closed-and-open discrete subspace of $S\times S$;
\item the square $S\times S$ is not pseudocompact;
\item $\beta S$ is not openly factorizable;
\item the almost periodic compactification $\AP(S)$ of $S$ is a
compact abelian topological group and hence the canonic homomorphism
$\eta:S\to\AP(S)$ is not injective.
\end{enumerate}
\end{thm}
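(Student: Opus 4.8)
The plan is to first isolate the one topological fact on which everything rests. Since $\C(p,q)$ is a dense subsemigroup of the topological semigroup $S$, the subspace $\C(p,q)$ inherits a Hausdorff topological-semigroup structure, so by the Eberhart--Selden non-topologizability theorem \cite{ES} it is \emph{discrete}. I would then upgrade ``discrete'' to ``open'': if $U\subset S$ is open with $U\cap\C(p,q)=\{c\}$, density of $\C(p,q)$ forces $\{c\}$ to be dense in $U$, and since $S$ is $T_1$ this gives $U=\{c\}$. Hence every point of $\C(p,q)$ is isolated in $S$, the set $\C(p,q)$ is open, and $M:=S\setminus\C(p,q)$ is closed with empty interior; in particular every neighbourhood of a point of $M$ meets $\C(p,q)$ in an infinite set.

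For item (1) I would prove the equivalent statement that $xy\in\C(p,q)$ forces $x,y\in\C(p,q)$. If $xy=c\in\C(p,q)$, then $\{c\}$ is open and continuity yields open $U\ni x$, $V\ni y$ with $UV=\{c\}$. Fixing one $u_0\in U\cap\C(p,q)$, every $v\in V\cap\C(p,q)$ satisfies $u_0v=c$; a direct computation in $\C(p,q)$ shows that for fixed $u_0,c$ the solution set $\{v\in\C(p,q):u_0v=c\}$ is finite, so $V\cap\C(p,q)$ is finite. By the empty-interior remark this is impossible if $y\in M$, whence $y\in\C(p,q)$; the symmetric computation gives $x\in\C(p,q)$, so $M$ is a two-sided ideal. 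Items (2) and (3) then follow cheaply: $D_c\subset\C(p,q)\times\C(p,q)$ consists of isolated points of $S\times S$ (a product of isolated points), so $D_c$ is open and discrete; any accumulation point $(a,b)$ of $D_c$ would satisfy $ab=c$ with $a\in M$ or $b\in M$, contradicting (1), so $D_c$ is also closed. Since $D_1=\{(q^n,p^n):n\in\w\}$ is an infinite clopen discrete subspace, the cover of $S\times S$ by its singletons together with the complement of $D_1$ is an infinite locally finite open cover with no finite subcover, so $S\times S$ is not pseudocompact, giving (3) (this is the argument of Corollary~\ref{c3.3}).

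For (4) I would argue by contradiction: if $\beta S$ is openly factorizable then so is $S$ (by Proposition~2.3 of \cite{BD}); moreover $S$ is separable (as $\C(p,q)$ is countable and dense), so $S\times S$ is separable and hence weakly Lindel\"of. Theorem~\ref{ap} then makes $\beta S$ the $\AP$-compactification, i.e.\ a compact topological semigroup, and the embedding $S\hookrightarrow\beta S$ carries $\C(p,q)$ into it, contradicting the Koch--Wallace theorem \cite{KW}. For (5), $\AP(S)$ is a compact topological semigroup with dense homomorphic image $\eta(S)$, and $\eta(1)$ is its identity $e$ (since $1$ is the identity of $S$). By Koch--Wallace the restriction $\eta|\C(p,q)$ cannot be injective, so its image is a proper homomorphic image of $\C(p,q)$; since every such image is a cyclic group \cite{CP}, $\eta(\C(p,q))$ is an abelian subgroup of $\AP(S)$ (equivalently $\eta(q)\eta(p)=e$ forces $\eta(p)$ to be invertible). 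This subgroup is dense, so $\AP(S)$ is commutative; and writing any $x\in\AP(S)$ as a limit of a net $g_\alpha\in\eta(\C(p,q))$ and passing to a convergent subnet of $(g_\alpha^{-1})$ produces a two-sided inverse of $x$, so $\AP(S)$ is algebraically a group. A compact topological semigroup that is algebraically a group is a topological group, so $\AP(S)$ is a compact abelian topological group; in it the idempotents $\eta(p^nq^n)$ all equal $e=\eta(1)$ while $p^nq^n\ne1$, so $\eta$ is not injective.

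The main obstacle I anticipate is step (5): turning the merely algebraic ``image is a cyclic group'' into the topological statement that $\AP(S)$ is a \emph{topological group}. The delicate points are the appeal to the classification of homomorphic images of the bicyclic semigroup (or, avoiding it, an explicit computation showing $\eta(p)^r=e$ or $\eta(q)^r=e$ for some $r\ge1$) and the passage from a dense subgroup of a compact topological semigroup to a genuine compact topological group. Everything else is driven by the single observation that $\C(p,q)$ sits in $S$ as an open discrete subsemigroup.
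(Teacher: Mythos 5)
Your proposal is correct, and its skeleton coincides with the paper's: discreteness of $\C(p,q)$ via Eberhart--Selden \cite{ES} upgraded to openness by density, closedness of $D_c$ via the ideal property plus continuity, the infinite clopen discrete $D_1=\{(q^n,p^n):n\in\w\}$ destroying pseudocompactness of $S\times S$, and for (4) exactly the paper's chain (Proposition~2.3 of \cite{BD}, separability of $S$ giving a weakly Lindel\"of square, Theorem~\ref{ap}, then the non-embeddability of $\C(p,q)$ into a compact topological semigroup). You deviate in two places, both soundly. For (1), the paper simply cites \cite{ES}, whereas you reprove it: your finiteness claim is right (with $u_0=p^mq^n$, $c=p^iq^j$, the equation $u_0v=c$ has at most $n+1$ solutions $v=p^kq^l$ with $k\le n$ and at most one with $k\ge n$, and symmetrically for $uv_0=c$), and combined with your observation that every neighbourhood of a point of $M=S\setminus\C(p,q)$ meets the dense set $\C(p,q)$ in an infinite set (finite sets being closed), this yields a self-contained proof of the ideal property --- though you still lean on \cite{ES} for discreteness, so the gain in independence is modest. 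For (5), after both you and the paper obtain from Corollary~1.32 of \cite{CP} that $Z=\eta(\C(p,q))$ is a dense cyclic subgroup of $\AP(S)$, the paper finishes by applying Theorem~\ref{sap}(2) to the compact semigroup $\AP(S)$ (whose square is trivially pseudocompact), while you argue directly: any $x\in\AP(S)$ is a limit of a net $g_\alpha\in Z$, a convergent subnet of $(g_\alpha^{-1})$ gives $y$ with $xy=yx=e$ by joint continuity, so $\AP(S)$ is algebraically a group, and then you quote the classical fact that a compact Hausdorff topological semigroup which is algebraically a group is a topological group (continuity of inversion follows, e.g., since the graph $\{(x,y):xy=yx=e\}$ is closed in the compact square). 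Your route is more elementary and avoids the Stone--\v Cech machinery behind Theorem~\ref{sap}, at the price of invoking that classical fact and of two points you gloss but which are easily supplied: that $1$ is a two-sided identity of all of $S$ (continuity plus density of $\C(p,q)$, so $e=\eta(1)$ is indeed an identity of $\AP(S)$), and that $\eta(S)$, hence $Z$, is dense in $\AP(S)$, which is standard for semigroup compactifications and is used tacitly by the paper as well.
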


\begin{proof}
1. The fact that $S\setminus\C(p,q)$ is a two-sided ideal in $S$ was proved by Eberhard and Selden in \cite{ES}.
\vskip3pt

2. Given any point $c\in\C(p,q)$ we should check that the set
$D_c=\{(x,y)\in\C(p,q)^2:xy=c\}$ is an open-and-closed discrete
subspace of $S\times S$. By \cite{ES}, the topology on $\C(p,q)$
induced from $S$ is discrete. Consequently, the subspace
$\C(p,q)$, being discrete and dense in $S$, is open in $S$. Then the
square $\C(p,q)\times\C(p,q)$ is open and discrete in $S\times S$
and so is its subspace $D_c$. It remains to check that the set
$D_c$ is closed in $S\times S$. Assuming the opposite, find an
accumulation point $(a,b)\in S\times S$ of the subset $D_c$.  The
continuity of the semigroup operation implies that $ab=c$. On the
other hand, since the space $\C(p,q)\times\C(p,q)$ is discrete,
one of the points $a,b$ belong to the ideal $S\setminus \C(p,q)$ and
hence $ab\in S\setminus\C(p,q)$ cannot be equal to $c$.
\smallskip

3. The space $S\times S$ fails to be pseudocompact because it
contains the infinite closed-and-open discrete subspace $D_1=\{(x,y):x,y\in\C(p,q),\;xy=1\}=\{(q^n,p^n):n\in\w\}$.
\smallskip

4. Assuming that the Stone-\v Cech compactification $\beta S$ of
$S$ is openly factorizable, we may apply Proposition~2.3 of
\cite{BD} to conclude that $S$ is an openly factorizable
pseudocompact space. Since the space $S$ has separable (hence 
weakly Lindel\"of) square, we can apply Theorem~\ref{ap} to
conclude that $\beta S$ is a compact topological semigroup that
contains the bicyclic semigroup. But this is forbidden by the
Hildenbrandt-Koch Theorem \cite{HK}.
\smallskip

5. Let $\eta:S\to\AP(S)$ be the homomorphism of $S$ into its
almost periodic compactification. The restriction $\eta|\C(p,q)$
cannot be injective because compact topological semigroups do not
contain bicyclic semigroups. Consequently, the image
$Z=\eta(\C(p,q))$ is a cyclic subgroup of $\AP(S)$ by Corollary
1.32 of \cite{CP}. Since $\C(p,q)$ is dense in $S$, the subgroup
$Z$ is dense in $\AP(S)$.  Now Theorem~\ref{sap}(2) guarantees
that $\AP(S)$ is a compact abelian topological group.
\end{proof}

The following  theorem extends (and corrects) Theorem~2.6 of
\cite{GPR}.

\begin{thm}\label{t1}
Let $S$ be a topological semigroup containing the bicyclic
semigroup $\C(p,q)$ as a dense subsemigroup. If the space $S$ is
countably compact at the set $E_C=\{p^nq^n:n\in\w\}$ of the
idempotents of $\C(p,q)$, then
\begin{enumerate}
\item the closure $\bar E_C$ of the set $E_C$ in $S$ is compact and
has a unique non-isolated point $e$ that commutes with all elements of $S$;
\item the map $\pi:S\to S$, $\pi:x\mapsto x\cdot e=e\cdot x$,
is a continuous homomorphism that retracts $S$ onto the ideal
$M=S\setminus\C(p,q)$ having the idempotent $e$ as a two-sided unit;
\item the element $a=\pi(p)$ generates a dense cyclic subgroup $Z$ of $M$;
\item $\pi(p^nq^m)=a^{n-m}$ for all $n,m\in\w$;
\item $\lim_{n\to\infty}p^{n+k}q^n=a^k$ for every $k\in\IZ$;
\item the space $S$ is regular if and only if the space $M=S\setminus\C(p,q)$ is regular.
\item If the space $S$ is regular and countably compact at $\C(p,q)$, then  the semigroup $S$ is topologically isomorphic to $\C(p,q)\cup_\pi M$ and the subsemigroup $\{(q^ne,p^ne):n\in\w\}$ is closed and discrete in $M\times M$.
\item If the space $S$ is Tychonoff and countably compact at $\C(p,q)$, then 
the space $M$ is not openly factorizable, $M\times M$ is not pseudocompact, and the semigroup $M$ contains no dense totally bounded topological subgroup.
\end{enumerate}
\end{thm}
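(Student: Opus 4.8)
The overall strategy is to locate inside $M=S\setminus\C(p,q)$ a central idempotent $e$, use it to build a retracting homomorphism $\pi$, and then recognise $S$ as a copy of $\C(p,q)\cup_\pi M$, so that (6)--(8) become consequences of the results already proved for that construction. Throughout I use Theorem~\ref{t5.1}: $\C(p,q)$ is discrete and open in $S$, $M$ is a closed two-sided ideal, and $D_1=\{(q^n,p^n):n\in\w\}$ is an infinite closed-and-open discrete subspace of $S\times S$.

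For (1): since $E_C$ is infinite and $S$ is countably compact at $E_C$, the sequence $(p^nq^n)$ has an accumulation point $e\in M$ (it is not in the discrete set $\C(p,q)$). Continuity of multiplication makes $e$ idempotent, and $p^mq^m\cdot p^nq^n=p^{\max(m,n)}q^{\max(m,n)}$ gives $p^mq^m\cdot e=e=e\cdot p^mq^m$. If $e,e'$ are two accumulation points, then approximating $e'$ on the right yields $ee'=e$ while approximating $e$ on the left yields $ee'=e'$, so $e=e'$; uniqueness forces $p^nq^n\to e$ and makes $\bar E_C=E_C\cup\{e\}$ a convergent sequence, hence compact. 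Centrality I obtain on generators by reindexing: $e\cdot p=\lim_n p^nq^{n-1}=\lim_j p^{j+1}q^j=\lim_n p\cdot p^nq^n=p\cdot e$, and likewise $eq=qe$, so $e$ commutes with $\C(p,q)$ and, by density and continuity of the translations, with all of $S$. Putting $a:=pe=ep$, continuity of the translation by $p^k$ (resp. $q^l$) gives $p^{n+k}q^n=p^k\!\cdot p^nq^n\to p^ke=a^k$ and $p^{n-l}q^n=q^l\!\cdot p^nq^n\to q^le=:a^{-l}$, and $p^lq^l\cdot e=e$, $q^lp^l=1$ show $a^{-l}$ inverts $a^l$; this is (5), while $\pi(p^nq^m)=p^nq^m\cdot e=\lim_k p^{k+(n-m)}q^k=a^{n-m}$ is (4).

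For (2)--(3), set $\pi(x)=xe=ex$. Centrality and idempotency make $\pi$ a continuous homomorphism with $\pi^2=\pi$, so its image equals the closed set $\{x:xe=x\}$; since $\pi(\C(p,q))=Z:=\langle a\rangle$, density gives $\pi(S)=\overline Z\subseteq M$. The delicate point is that $e$ is a \emph{two-sided unit on} $M$, i.e. $M=\overline Z$. To see this, take $x\in M$ and a net $s_\alpha=p^{a_\alpha}q^{b_\alpha}\in\C(p,q)$ with $s_\alpha\to x$; as $\C(p,q)$ is discrete and $x\notin\C(p,q)$, the exponents are unbounded, so along a subnet $a_\alpha\to\infty$ or $b_\alpha\to\infty$. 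Using $p^nq^n\cdot s=s$ once $a_\alpha\ge n$ (resp. $s\cdot p^nq^n=s$ once $b_\alpha\ge n$) one gets $p^nq^n\cdot x=x$ (resp. $x\cdot p^nq^n=x$) for every fixed $n$, and letting $n\to\infty$, $ex=x=xe$. Hence $M=\overline Z$, so $Z$ is dense in $M$, $e$ is a two-sided unit of $M$, and $\pi$ retracts $S$ onto $M$.

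Finally (6)--(8). Regularity is hereditary, giving one direction of (6); conversely, if $M$ is regular, then for $x\in M$ and open $O\ni x$ I pick (regularity of $M$) an open $U\ni x$ in $M$ with $\cl_M U\subseteq O$ and check that $\pi^{-1}(U)\cap O$ is a neighbourhood of $x$ whose closure meets $\C(p,q)$ only in its own (isolated) points and meets $M$ inside $\cl_M U\subseteq O$, so $S$ is regular. For (7) the operation already agrees with that of $\C(p,q)\cup_\pi M$ because $e$ is a unit on $M$ (so $xy=x(ey)=(xe)y=\pi(x)\cdot y$ for $x\in\C(p,q)$, $y\in M$); the \textbf{main obstacle} is to match the \emph{topologies}. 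Since the topology of $S$ is at least as fine as the $\C(p,q)\cup_\pi M$-topology, it suffices to show each open $O\ni x$ ($x\in M$) contains a basic set $\bar\pi^{-1}(U)\setminus F$. Using regularity of $S$, choose open $V$ with $x\in V\subseteq\cl V\subseteq O$ and put $U=V\cap M$; if $B=\{s\in\C(p,q):\pi(s)\in U\}\setminus O$ were infinite, countable compactness at $\C(p,q)$ would give it an accumulation point $w$, necessarily in $M$ (discreteness) and in $S\setminus O$ (as $B\subseteq S\setminus O$), yet $\pi(w)\in\cl V\subseteq O$ forces $w=\pi(w)\in O$, a contradiction; so $B$ is finite and $F:=B$ works. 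This yields $S\cong\C(p,q)\cup_\pi M$, whence $\{(q^ne,p^ne)\}=\pi^2(D_1)$ is closed and discrete in $M\times M$ by Theorem~\ref{t3.2}. For (8), Tychonoffness implies regularity, so (7) applies; $M$ is Tychonoff, $Z=\pi(\C(p,q))$ is dense and cyclic, and countable compactness of $S$ at $\C(p,q)$ passes through the continuous $\pi$ to countable compactness of $M$ at $Z$. Moreover $Z$ is infinite: otherwise $M=\overline Z$ is finite, hence compact, and Proposition~\ref{p1.4} would make $S\times S$ countably compact, contradicting that $D_1$ is infinite, closed and discrete in $S\times S$. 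Thus all hypotheses of Theorem~\ref{t4.1} hold, and its part (5) gives precisely the three conclusions of (8).
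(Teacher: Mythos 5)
Your proposal is correct and follows essentially the same route as the paper's proof: it locates the unique central idempotent $e$ as the limit of the idempotents $p^nq^n$, builds the continuous retraction $\pi:x\mapsto xe$, identifies $S$ topologically with $\C(p,q)\cup_\pi M$ via the finiteness argument based on countable compactness at $\C(p,q)$, and then derives (7)--(8) from Theorems~\ref{t3.2} and \ref{t4.1}(5), with your net-based verifications of the uniqueness of $e$ and of $\pi|M=\id$ being mere reformulations of the paper's neighborhood arguments. If anything, you are slightly more careful than the paper, since you explicitly verify the hypotheses of Theorem~\ref{t4.1} (that $Z$ is infinite, via Proposition~\ref{p1.4} and the closed discrete set $D_1$, and that $M$ is countably compact at $Z$) which the paper leaves implicit.
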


\begin{proof}
1. The set $E_C=\{p^nq^n:n\in\w\}$ of the idempotents of the
bicyclic semigroup $\C(p,q)$ has an accumulation point $e\in\bar
E_C$ because $S$ is countably compact at $E_C$. We claim that this
accumulation point $e$ is unique. Assume conversely that $E_C$ has
another accumulation point $e'\ne e$. Then the product $ee'$
differs from $e$ or $e'$. We lose no generality assuming that
$ee'\ne e'$. Since $S$ is Hausdorff, we can find two disjoint open
sets $O(ee')\ni ee'$ and $O'(e')\ni e'$. By the continuity of the
semigroup operation on $S$, there are two neighborhoods
$O(e)$ and $O(e')\subset O'(e')$ of the points $e$, $e'$ in $S$
such that $O(e)\cdot O(e')\subset O(ee')$. Since $e$ is an
accumulation point of the set $E_C$, we can find a number $n\in\w$
such that $p^nq^n\in O(e)$. By a similar reason, there is a number
$m\ge  n$ such that $p^mq^m\in O(e')$. Then
$$O(e')\ni p^mq^m=p^nq^n\cdot p^mq^m\in O(e)\cdot O(e')\subset O(ee'),$$which is not possible as $O'(e')$ and $O(ee')$ are disjoint.

Therefore the set $E_C$ has a unique accumulation point $e$. We
claim that the sequence $\{p^nq^n\}_{n=0}^\infty$ converges to
the point $e$. Otherwise, we would find a neighborhood $O(e)$ such
that the complement $E_C\setminus O(e)$ is infinite and hence has
an accumulation point $e'\in S\setminus O(e)$ different from $e$,
which is not possible.

This proves that the sequence $\{p^nq^n\}_{n=0}^\infty$
converges to $e$ and hence the set $\bar E_C=E_C\cup\{e\}$ is
compact and metrizable. Since the set $E=\{x\in S:xx=x\}$ of
idempotents of $S$ is closed, the accumulation point $e$ of the
set $E_C=E\cap\C(p,q)$ is an idempotent.

Next, we show that $e$ commutes with all the elements of $S$. We
start with the element $p$: $$p\cdot
e=p\cdot\lim_{k\to\infty}p^kq^k=\lim_{k\to\infty}p^{k+1}q^k=\lim_{k\to\infty}p^{k+1}q^{k+1}p=e\cdot
p.$$ By analogy we can prove that $q\cdot e=e\cdot q$. Moreover,
$$pe\cdot eq=peq=p\cdot (\lim_{k\to\infty}p^kq^k)\cdot q=\lim_{k\to\infty}pp^kq^kq=\lim_{k\to\infty}p^{k+1}q^{k+1}=e,$$
which means that the elements $pe=ep$  and $qe=eq$ are mutually inverse. It follows that the element $a=pe$ generates a cyclic subgroup $Z$ of $S$.

We claim that for every $n,m\in\w$ we have
$p^nq^m\cdot e=e\cdot p^nq^m=a^{n-m}$. Indeed, if $n\ge m$, then
$$
\begin{aligned}
p^nq^m\cdot e&=p^nq^m\cdot\lim_{k\to\infty}p^kq^k=
\lim_{k\to\infty}p^nq^mp^kq^k=\lim_{k\to\infty}p^np^{k-m}q^k=\\
&=\lim_{k\to\infty}p^{n-m}p^kq^k=p^{n-m}\lim_{k\to\infty}p^kq^k=
p^{n-m}\cdot e=(pe)^{n-m}=a^{n-m}.
\end{aligned}$$
Similarly,
$$
\begin{aligned}
e\cdot p^nq^m&=\lim_{k\to\infty}p^kq^kp^nq^m=
\lim_{k\to\infty}p^kq^{k-n}q^m=\lim_{k\to\infty}p^{n-m}p^{k-n+m}q^{k-n+m}=\\
&=p^{n-m}\lim_{k\to\infty}p^{k-n+m}q^{k-n+m}=p^{n-m}\cdot e=(pe)^{n-m}=a^{n-m}.
\end{aligned}$$
By analogy we can treat the case $n\le m$.

Therefore, $e$ commutes with all elements of the bicyclic
semigroup $\C(p,q)$. Consequently, the closed subset $\{x\in
S:xe=ex\}$ of $S$ contains the dense subset $\C(p,q)$ of $S$ and
thus coincides with $S$, which means that the
idempotent $e$ commutes with all elements of $S$.

Taking into account that the subspace $\C(p,q)$ is discrete in $S$ \cite{ES}, we conclude that the idempotent $e$, being an accumulation point of $\C(p,q)$, belongs to the complement $M=S\setminus\C(p,q)$, which is a two-sided ideal in $S$ according to Theorem~\ref{t5.1}(1). Consequently, $xe=ex\in M$ for all $x\in S$. 
\smallskip

2. It follows that the map $\pi:S\to M$, $\pi:x\mapsto
xe=ex$, is a continuous homomorphism. Let us show that $\pi(x)=x$ for every $x\in M$. Assuming the converse, find $x\in M$ with $\pi(x)\ne x$. 
It is clear that $x\ne e$. Since $S$ is Hausdorff, the points $x,e$ and $\pi(x)=xe=ex$, have  neighborhoods $O(x),O(e),O(\pi(x))\subset S$ such that $O(x)\cdot O(e)\cup O(e)\cdot O(x)\subset O(\pi(x))$ and $O(x)\cap O(\pi(x))=\emptyset$. Take any idempotent $p^kq^k\in O(e)\cap \C(p,q)$. The intersection $O(x)\cap\C(p,q)$ is infinite and hence contains a point $p^iq^j\in O(x)\cap \C(p,q)$ such that $i+j>2k$.
Then either $i>k$ or $j>k$. If $i>k$, then $$p^iq^j=p^kq^kp^iq^j\in O(x)\cap(O(e)\cdot O(x))\subset O(x)\cap O(\pi(x))=\emptyset.$$ If $j>k$, then 
$$p^iq^j=p^iq^jp^kq^k\in O(x)\cap(O(x)\cdot O(e))\subset O(x)\cap O(\pi(x))=\emptyset.$$
Both cases lead to a contradiction that completes the proof of the equality $\pi(x)=x$ for $x\in M$. This means that $\pi$ retracts $S$ onto  $M$. 
\smallskip

3. As we have already proved,
$\pi(p^nq^m)=a^{n-m}\in Z$ for every $n,m\in\w$. Since $\C(p,q)$ is dense in $S$ its image $Z=\pi(\C(p,q))$ is dense in $\pi(S)=M$. 
\smallskip

4--5. The statements (4)--(5) have been proved in the first item.
\smallskip

6. If $S$ is regular, then so is its subspace $M=S\setminus\C(p,q)$. Now assume that $M$ is regular. Given a point $x\in S$ and an open neighborhood $U$ of $x$ in $S$ we need to find a neighborhood $V$ of $x$ in $S$ such that $\overline{V}\subset U$. If the point $x$ is isolated, then we can put $V=\{x\}$. So, we assume that $x$ is non-isolated in $S$. In this case $x\in M$ (because $\C(p,q)$ is an open discrete subspace of $S$ by \cite{ES}). By the regularity of the space $M$ the point $x$ has an open neighborhood $W\subset M$ such that $\overline{W}\subset U$. The continuity of the retraction $\pi:S\to M$ implies that $V=U\cap \pi^{-1}(W)$ is an open neighborhood of $x$ in $S$. It is easy to check that this neighborhood has the required property: $\overline{V}\subset U$.
\smallskip

7. Assume that $S$ is regular and countably compact at $\C(p,q)$. We claim that the identity map $h:S\to \C(p,q)\cup_\pi M$ is a homeomorphism. The continuity of this map follows from the continuity of the map $\pi:S\to M$ and the definition of the topology of $\C(p,q)\cup_\pi M$. Since each point of $\C(p,q)$ is isolated in $\C(p,q)\cup_\pi M$, the  inverse identity map $h^{-1}:\C(p,q)\cup_\pi M\to S$ is continuous at the set $\C(p,q)$. So, it remains to check the continuity of $h^{-1}$ at a point $x\in M$. Given any neighborhood $U$ of $x$ in $S$, we need to find a neighborhood $V$ of $x$ in $\C(p,q)\cup_\pi M$ such that $V\subset U$. By the regularity of $M$, the point $x$ has an open neighborhood $W$ in $M$ such that $\overline{W}\subset U$. We claim that the set $F=\pi^{-1}(W)\setminus U$ is finite. Otherwise, by the countable compactness of $S$ at $\C(p,q)$, we can find an accumulation point $y$ of $F$. Since $F\subset S\setminus U$, the point $y$ belongs to the closed subset $S\setminus U$ of $S$. Since $\C(p,q)$ is discrete in $S$, the point $y$, being non-isolated in $S$, belongs to the complement $M=S\setminus\C(p,q)$. The continuity of the retraction $\pi:S\to M$ implies that $y=\pi(y)$ is an accumulation point of the set $\pi(F)\subset W$ and hence $y\in \overline{\pi(F)}\subset\overline{W}\subset U$, which  contradicts $y\in S\setminus U$. Thus $F$ is finite, and the set $V=\pi^{-1}(W)\setminus F$ is a required neighborhood of $x$ in $\C(p,q)\cup_\pi M$ with $\overline{V}\subset U$.

Thus $S$ is topologically isomorphic to $\C(p,q)\cup_\pi M$ and hence 
$\C(p,q)\cup_\pi M$ is a topological semigroup. By Theorem~\ref{t4.1}(2), the subsemigroup $\pi^2(D_1)=\{(q^ne,p^ne):n\in\w\}$ is closed and discrete in $M\times M$.
\smallskip

8. If $S$ is Tychonoff and countably compact at $\C(p,q)$, then $S$ is topologically isomorphic to \mbox{$\C(p,q)\cup_\pi M$} by the preceding item. Now Theorem~\ref{t4.1}(5) implies that the space $M$ is not openly factorizable, $M\times M$ is not pseudocompact, and the semigroup $M$ contains no dense totally bounded topological subgroup.
\end{proof}

\section{A countably (pra)compact semigroup that contains $\C(p,q)$}\label{s6}

In this section we shall construct a countably (pra)compact topological
semigroup containing a bicyclic
semigroup. Our main result is: 

\begin{thm}\label{t6.2}
The bicyclic subgroup is a subsemigroup of some Tychonoff countably pracompact topological semigroup.
\end{thm}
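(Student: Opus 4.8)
The plan is to realize the desired semigroup in the form $S=\C(p,q)\cup_\pi M$ produced by the attaching construction of Section~\ref{s4}, so that the whole problem reduces to manufacturing a single topological monoid $M$. Concretely, I would look for a Tychonoff commutative topological monoid $M$ with identity $e$ that contains a dense infinite cyclic subgroup $Z=\langle a\rangle$ and satisfies two further properties: $M$ is countably compact at the dense subset $Z$, and the set $\Delta=\{(a^{-n},a^n):n\in\w\}$ is closed and discrete in $M\times M$. Once such an $M$ is available, I define $\pi:\C(p,q)\to M$ by $\pi(p^nq^m)=a^{n-m}$, so that $\pi(\C(p,q))=Z$ and $\pi^2(D_1)=\Delta$. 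Then $S=\C(p,q)\cup_\pi M$ is Tychonoff by Proposition~\ref{p1.1}, countably pracompact by Proposition~\ref{p1.6} (applied with the dense set $A=Z\supset\pi(\C(p,q))$), a topological semigroup by Theorem~\ref{t4.1}(2), and by construction it contains $\C(p,q)$ as a subsemigroup. This disposes of everything except the construction of $M$.

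Before constructing $M$ I would record the constraints that pin down its shape. By Theorem~\ref{t4.1}(5) the square $M\times M$ cannot be pseudocompact; since the Comfort--Ross theorem guarantees that the square of a pseudocompact topological group is pseudocompact, $M$ cannot itself be a topological group and must be a proper, not inverse-closed, submonoid of its enveloping group $\Gamma$. Moreover, if $(a^n)_{n\ge 1}$ accumulated at $e$ then $\Delta$ would accumulate at $(e,e)\in M\times M$; hence the positive powers of $a$ must avoid $e$, which in turn forbids placing $M$ inside any compact monothetic group, where the powers of $a$ are recurrent. Thus $M$ has to live inside a \emph{non-compact} ambient in which $(a^n)_{n\ge 1}$ stays away from $e$ while still clustering abundantly.

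For the construction itself I would embed $Z=\langle a\rangle$ into a suitable topological abelian group $\Gamma$ of weight $\cc$ in which $(a^n)_{n\ge 0}$ possesses $2^{\cc}$ distinct cluster points and does not accumulate at $e$; for the consistent countably compact counterpart one takes $\Gamma$ to be a countably compact torsion-free group without convergent sequences, while for the present Tychonoff pracompact statement an extension of a power of the circle suffices. Enumerating the infinite subsets of $Z$ as $\{B_\alpha:\alpha<\cc\}$, I would build $M=\bigcup_{\alpha<\cc}M_\alpha$ as an increasing chain of countable commutative submonoids of $\Gamma$ with $Z\subset M_0$. At stage $\alpha$ I adjoin to $M_\alpha$ a cluster point $x_\alpha\in\Gamma$ of $B_\alpha$, together with the submonoid it generates over $M_\alpha$, so that $B_\alpha$ acquires an accumulation point in $M$; this secures countable compactness of $M$ at $Z$. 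The point $x_\alpha$ is chosen so as to preserve the invariant that \emph{no cluster point of $(a^n)_{n\ge 0}$ belongs to $M$ simultaneously with its inverse}. Such a choice is available because the set of admissible cluster points of $B_\alpha$ has cardinality $2^{\cc}$, whereas the values to be avoided (inverses of the cluster points already placed, and the monoid-combinations they spawn with $M_\alpha$) number at most $|M_\alpha|\le\cc<2^{\cc}$.

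Finally I would verify that $M=\bigcup_{\alpha<\cc}M_\alpha$ works: it is a commutative topological monoid, hence Tychonoff as a subspace of $\Gamma$; $Z$ is dense; $M$ is countably compact at $Z$ by the stagewise choices; and $\Delta$ is closed and discrete in $M\times M$, since any accumulation point of $\Delta$ in $\Gamma\times\Gamma$ has the form $(v^{-1},v)$ with $v$ a cluster point of $(a^n)_{n\ge 0}$, and the invariant prevents both $v$ and $v^{-1}$ from lying in $M$. I expect the main obstacle to be precisely this last balancing act: countable compactness at $Z$ forces both $(a^n)$ and $(a^{-n})$ to cluster inside $M$, whereas discreteness of $\Delta$ forbids any positive-power cluster point and its inverse from both belonging to $M$. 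Reconciling these opposing demands — while keeping $M$ closed under multiplication through every stage of the recursion — is the technical heart of the argument, and it is the reason the ambient group must be non-compact with a $2^{\cc}$-sized supply of cluster points.
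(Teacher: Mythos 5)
Your reduction to constructing a suitable monoid $M$ is exactly the paper's strategy, but your construction of $M$ contains a fatal flaw: you insist that $M$ carry the subspace topology from an ambient topological group $\Gamma$, and under that assumption the set $\Delta=\{(a^{-n},a^n):n\in\w\}$ can \emph{never} be closed and discrete in $M\times M$, no matter how cleverly the recursion chooses cluster points and no matter whether $\Gamma$ is compact. Indeed, since $M$ is countably compact at $Z$, the set $Z$ has accumulation points in $M\subset\Gamma$, so $Z$ is not closed and discrete in $\Gamma$; but a discrete subgroup of a Hausdorff topological group is closed, so $Z$ cannot even be discrete, whence $e\in\overline{\{a^n:n\ne 0\}}$. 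Since the inversion of $\Gamma$ is a homeomorphism fixing $e$ and swapping positive with negative powers, $e$ is then a cluster point of $\{a^n:n\ge1\}$; choosing a symmetric neighborhood $V=V^{-1}$ of $e$ inside any given neighborhood, infinitely many $n$ satisfy $(a^{-n},a^n)\in V\times V$, so $(e,e)$ is an accumulation point of $\Delta$ lying in $M\times M$ (as $e\in Z\subset M$). By Theorem~\ref{t4.1}(2) the attached semigroup $S=\C(p,q)\cup_\pi M$ then fails to be a topological semigroup. Note that your own invariant --- no cluster point $v$ of $(a^n)_{n\ge0}$ with both $v,v^{-1}\in M$ --- is thus violated unavoidably at $v=e$ itself, so the recursion you describe cannot be completed; and your deduction that the ambient must be non-compact (``an extension of a power of the circle suffices'') chases a requirement that is intrinsically contradictory, since ``$(a^n)_{n\ge1}$ stays away from $e$ while every infinite subset of $Z$ clusters'' is impossible in any Hausdorff group.

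The missing idea, which is the crux of the paper's proof, is to \emph{refine} the topology on $M$ rather than keep the subspace topology: the paper takes the ambient to be the \emph{compact} group $b\IZ$ with $Z=\IZ^\sharp$ (which has no non-trivial convergent sequences), uses Lemma~\ref{l6.3} --- a transfinite recursion quite close in spirit to yours, but maintaining \emph{linear independence} by avoiding the division hull $G_{<\alpha}$, so that $M\cong\IZ\oplus\FM(\cc)$ --- to arrange that every infinite subset of $Z$ has an accumulation point in $L\setminus L_0\subset M\setminus Z$, and then replaces $\tau$ by $\tau'=\{U\cup A:U\in\tau,\ A\subset Z\}$, which isolates every point of $Z$ while keeping $Z$ dense. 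This refinement is legitimate precisely because $M$ is only a monoid, not a group: $M\setminus Z$ is an ideal, so $(M,\tau')$ remains a topological semigroup, and since the accumulation points of infinite subsets of $Z$ were planted outside $Z$, where $\tau'$-neighborhoods coincide with $\tau$-neighborhoods, countable compactness at $Z$ survives. With $Z$ discrete and $M\setminus Z$ an ideal, Theorem~\ref{t4.1}(3) (via Theorem~\ref{t3.2}(4)) yields that $S$ is a topological semigroup, and Propositions~\ref{p1.1} and \ref{p1.6} finish as in your first paragraph. In short: your step (1) matches the paper, but your ``balancing act'' in the ambient group is provably impossible, and the resolution is the $\tau\mapsto\tau'$ refinement you never consider.
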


The proof of this theorem relies on four lemmas.

\begin{lem}\label{seq} A subgroup $H$ of a topological group $G$ contains  a non-trivial convergent sequence if and only if $H$ contains a non-trivial sequence that converges in $G$.
\end{lem}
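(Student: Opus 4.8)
The plan is to reduce the non-trivial implication to the single observation that the identity $e$ always belongs to the subgroup $H$, by translating an arbitrary convergent sequence in $H$ so that its limit becomes $e$. One implication is immediate: a non-trivial sequence in $H$ converging to a point of $H$ is, in particular, a non-trivial sequence in $H$ that converges in $G$. So I would concentrate on the converse.

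For the converse, suppose $(h_n)_{n\in\w}$ is a non-trivial (i.e.\ not eventually constant) sequence in $H$ that converges in $G$ to some point $g$. Since $(h_n)$ is not eventually constant, the set of indices $n$ with $h_{n+1}\ne h_n$ is infinite; let $n_1<n_2<\cdots$ enumerate it, and put $g_k=h_{n_k}^{-1}h_{n_k+1}$. As $H$ is a subgroup, every $g_k$ lies in $H$, while $g_k\ne e$ by the choice of the indices $n_k$. Using the continuity of inversion and of multiplication in the topological group $G$, together with $h_{n_k}\to g$ and $h_{n_k+1}\to g$, I would conclude that $g_k\to g^{-1}g=e$, so that $(g_k)$ is a sequence in $H$ converging to the point $e\in H$.

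The one point requiring care is the non-triviality of the constructed sequence $(g_k)$, and this is the main (indeed only) obstacle. Here I would invoke the standing Hausdorff assumption: an eventually constant sequence converges only to its eventual value, so were $(g_k)$ eventually constant it would have a limit different from $e$, contradicting $g_k\to e$. Hence $(g_k)$ is not eventually constant, and it is the desired non-trivial sequence converging inside $H$. The delicate maneuver is precisely this: one must first pass to the indices where consecutive terms of $(h_n)$ differ before forming the quotients $h_{n_k}^{-1}h_{n_k+1}$, and then use Hausdorffness to prevent these quotients from collapsing to a constant; the convergence $g_k\to e$ itself is a routine consequence of the continuity of the group operations.
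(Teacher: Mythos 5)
Your proof is correct and takes essentially the same route as the paper's: both obtain a non-trivial sequence in $H$ converging to the identity $e\in H$ by forming quotients of consecutive terms of the given sequence and using continuity of the group operations. The paper's one-line version treats only the essential case where the limit lies in $G\setminus H$ (non-triviality of the quotient sequence then being automatic from Hausdorffness), while your passage to the indices where consecutive terms differ is a harmless refinement of the same idea that handles all cases uniformly.
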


\begin{proof} If a sequence $\{x_n\}_{n\in\w}\subset H$ converges to a point $x\in G\setminus H$, then $(x_{n+1}^{-1}x_n)_{n\in\w}$ is a non-trivial sequence in $H$ that converges to the neutral element $e=x^{-1}x$.
\end{proof}

The following well-known lemma can be proved by a standard argument involving binary trees.

\begin{lem}\label{l1} If a Tychonoff space $X$ is countably compact at an infinite subset $H\subset X$ that contains no non-trivial sequence that converges in $X$, then   the closure $\cl_X(A)$ of any
infinite subset $A\subset H$ has cardinality $\ge\cc$.
\end{lem}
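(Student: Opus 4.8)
The plan is to run the classical Cantor-scheme (binary-tree) argument: use the hypotheses to split each infinite subset of $A$ into two pieces sitting inside open sets with disjoint closures, and then read off $\cc$-many distinct cluster points along the branches of the tree. First I would reduce to the case where $A$ is countable: replacing $A$ by any countably infinite $A_0\subseteq A$ only shrinks the closure, so it suffices to bound $|\cl_X(A_0)|$ from below. Thus I assume $A=\{a_n:n\in\w\}\subseteq H$ is countably infinite.

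The heart of the matter is a splitting step. Given any infinite $B\subseteq A$, countable compactness of $X$ at $H$ yields an accumulation point $p\in X$ of $B$. I claim $B$ does not converge to $p$: otherwise, enumerating $B$ as an injective sequence, we would obtain a non-trivial sequence in $H$ converging in $X$, contradicting the hypothesis. Hence some open neighbourhood $U\ni p$ has $B\setminus U$ infinite, while $B\cap U$ is infinite because $p$ is an accumulation point. Using that $X$ is Tychonoff, I choose a continuous $f\colon X\to[0,1]$ with $f(p)=0$ and $f\equiv 1$ on the closed set $X\setminus U$; then $V=f^{-1}([0,\tfrac13))$ and $W=f^{-1}((\tfrac23,1])$ have disjoint closures, and $B\cap V$ and $B\cap W\supseteq B\setminus U$ are both infinite. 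This produces two infinite subsets of $B$ contained in open sets with disjoint closures.

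Then I would build, by recursion on $s\in 2^{<\w}$, infinite sets $A_s\subseteq A$ with $A_\emptyset=A$, applying the splitting step to $B=A_s$ to obtain infinite $A_{s0},A_{s1}\subseteq A_s$ with $\cl_X(A_{s0})\cap\cl_X(A_{s1})=\emptyset$; along each branch the sets are nested. For a branch $b\in 2^\w$ I pick distinct points $x_n\in A_{b|n}$; the infinite set $\{x_n:n\in\w\}\subseteq H$ has an accumulation point $z_b$, and since its tails lie in the $A_{b|n}$ one gets $z_b\in\bigcap_n\cl_X(A_{b|n})\subseteq\cl_X(A)$. Finally, for $b\ne b'$, letting $n$ be least with $b(n)\ne b'(n)$ and $s=b|n=b'|n$, the points $z_b$ and $z_{b'}$ lie respectively in the disjoint closures $\cl_X(A_{s0})$ and $\cl_X(A_{s1})$, so $z_b\ne z_{b'}$. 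Hence $b\mapsto z_b$ injects $2^\w$ into $\cl_X(A)$, giving $|\cl_X(A)|\ge\cc$. I expect the splitting step to be the only real obstacle: everything hinges on combining countable compactness at $H$ (to produce a cluster point) with the absence of convergent sequences (to ensure the cluster point does not absorb all of $B$, so a genuine two-way split exists), while the Tychonoff axiom supplies the disjoint closures that make distinct branches separate.
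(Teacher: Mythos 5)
Your proof is correct and is exactly the ``standard argument involving binary trees'' that the paper invokes without writing out: the paper states Lemma~\ref{l1} with no proof beyond that remark, and your Cantor-scheme construction (cluster point from countable compactness at $H$, non-convergence from the no-convergent-sequences hypothesis to get a genuine split, complete regularity to separate the two pieces by open sets with disjoint closures, then $\cc$-many distinct cluster points along branches) is precisely that argument, carried out with all the details in order. Nothing to fix.
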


A subset $L$ of an abelian group $G$ is called {\em linearly
independent} if for any pairwise distinct points $x_1,\dots, x_k\in L$ and any integer numbers $n_1,\dots,n_k$ the equality $n_1x_1+\dots +n_kx_k=0$ implies $n_1=\dots=n_k=0$. It is easy to see that $L\subset G$ is linearly independent if and only if for the free abelian group $\FA(L)$ generated by  $L$ the unique homomorphism $h:\FA(L)\to G$ such that
$h|L=\id_L$ is injective. For a linearly independent subset
$L\subset G$ we shall identify the free abelian group $\FA(L)$ with
the subgroup of $G$ generated by $L$.

\begin{lem}\label{l6.3} Let an Abelian torsion-free topological group $G$ is countably compact at a subgroup $H\subset G$ that contains no non-trivial convergent sequence. 
Each linearly independent
subset $L_0\subset G$ of size $|L_0|<\cc$ can be enlarged to a
linearly independent subset $L\subset G$ of size $|L|=\cc$ such
that the set $L\setminus L_0$ contains an accumulation point  of
each infinite subset $A\subset \FA(L)\cap H\subset G$.
\end{lem}

\begin{proof}
Without loss of generality, $L_0\ne\emptyset$.
Take any faithfully indexed set $X=\{x_\alpha:\alpha<\cc\}$ of
cardinality $|X|=\cc$ such that $X\cap L_0=\emptyset$ and consider
the free abelian group $\FA(L_0\cup X)$ generated by the  union
$L_0\cup X$. For every ordinal $\alpha<\cc$ let
$X_{<\alpha}=L_0\cup\{x_\beta:\beta<\alpha\}$ and
$X_{\le\alpha}=L_0\cup\{x_\beta:\beta\le\alpha\}$. So, $L_0\cup
X=X_{<\cc}$.

Denote by $\mathbf A$ the set of all countable subsets of the free
abelian group $\FA(X_{<\cc})$. Since $|\FA(X_{<\cc})|=\cc$, the set
$\mathbf A$ has size $|\mathbf A|=\cc^\w=\cc$. To each set $A\in\mathbf A$ assign the smallest ordinal $\xi(A)\le\cc$
such that $A\subset \FA(X_{<\xi(A)})$ and observe that $\xi(A)<\cc$
because $\cc$ has uncountable cofinality. It follows that
$\xi(A)=0$ if and only if $A\subset \FA(L_0)$.

We claim that there is an enumeration $\mathbf
A=\{A_\alpha:\alpha<\cc\}$ of the set $\mathbf A$ such that
$\xi(A_\alpha)\le\alpha$ for every ordinal $\alpha<\cc$. To
construct such an enumeration, first fix any enumeration $\mathbf
A=\{A'_\alpha:\alpha<\cc\}$ such that $A'_0\subset \FA(L_0)$ and
for every $A\in\mathbf A$ the set $\{\alpha<\cc:A'_\alpha=A\}$ has the 
size continuum. Next, for every $\alpha<\cc$ put
$$A_\alpha=\begin{cases}
A'_\alpha&\mbox{ if $\xi(A_\alpha')\le\alpha$}\\
A'_0&\mbox{otherwise}.
\end{cases}
$$

The identity inclusion $X_{<0}=L_0\subset G$ extends to a unique
group homomorphism $h_{<0}:\FA(X_{<0})\to G$ which is injective
because of the linear independence of $L_0$.

Inductively, for each ordinal $\alpha<\cc$ we shall construct an
injective homomorphism $h_\alpha:\FA(X_{\le\alpha})\to G$ such that
\begin{itemize}
\item $h_\alpha|\FA(X_{\le\beta})=h_\beta$ for all  $\beta<\alpha$;
\item if $h_\alpha(A_\alpha)\subset H$, then the point $\bar x_\alpha=h_\alpha(x_\alpha)\in G$ is an accumulation
point of the set $h_\alpha(A_\alpha)$.
\end{itemize}

We start with choosing a point $\bar x_\alpha$. Consider the
injective group homomorphism $h_{<\alpha}: \FA(X_{<\alpha})\to G$
such that $h_{<\alpha}| \FA(X_{\le\beta})=h_{\beta}$ for all 
$\beta<\alpha$. The image
$h_{<\alpha}(\FA(X_{<\alpha}))$ is a free abelian
subgroup of size $<\cc$ in $G$. Consider the subgroup 
$G_{<\alpha}=\big\{x\in G:\exists n>0\quad nx\in h_{<\alpha}(\FA(X_{<\alpha}))\big\}$. Since $G$ is torsion-free, $|G_{<\alpha}|\le\aleph_0\cdot |\FA(X_{<\alpha})|<\mathfrak c$.

Since the homomorphism $h_{<\alpha}: \FA(X_{<\alpha})\to G$ is
injective, the set $B_\alpha=h_{<\alpha}(A_\alpha)$ is infinite.
If $B_\alpha\subset H$, then by Lemmas~\ref{seq} and \ref{l1}, the closure $\overline{B}_\alpha$ of $B_\alpha$ in $G$ has cardinality  $|\overline{B}_\alpha|\ge\cc$.
Consequently, we can find a point $\bar
x_\alpha\in\overline{B}_\alpha\setminus G_{<\alpha}$.
If $B_\alpha\not\subset H$, then take $\bar x_\alpha$ be any point of the set 
$\overline{H}\setminus G_{<\alpha}$. Such a point $\bar x_\alpha$ exists because the closure $\overline{H}$ of $H$ in $G$ has cardinality $|\overline{H}|\ge\mathfrak c>|G_{<\alpha}|$.

The choice of the point $\bar x_\alpha\notin G_{<\alpha}$
guarantees that the injective homomorphism $h_{<\alpha}$ extends
to an injective homomorphism $h_\alpha:  \FA(X_{\le \alpha})\to G$
such that $h_{\alpha}(x_\alpha)=\bar x_\alpha$. This completes the
inductive step as well as the inductive construction.
\smallskip

Now consider the injective homomorphism $h=h_{<\cc}:\FA(X_{<\cc})\to G$ and observe that the image $L=h(X_{<\cc})$ of $X_{<\cc}=L_0\cup
X$ is a linearly independent subset of $G$. By the choice of the
homomorphism $h_{<0}$, we have $L_0=h(L_0)\subset L$.

We claim that the subgroup $\FA(L)=h(\FA(X_{<\cc})$ of $G$ generated by the set $L$ is countably compact at the subset $H\cap\FA(L)$. Take any countable 
 infinite subset $B\subset H\cap\FA(L)$ 
and consider its preimage $A=h^{-1}(B)\subset
\FA(X_{<\cc})$. It follows that $A=A_\alpha$ for some $\alpha<\cc$.
The choice of the point $\bar x_\alpha\in L\setminus L_0$
guarantees that $\bar x_\alpha$ is an accumulation point of the
set $B=h(A_\alpha)$.
\end{proof}

A (topological) semigroup $S$ is called a ({\em topological}\/) {\em
monoid} if $S$ has a two-sided unit $1$. The subgroup $H_1=\{x\in
S:\exists y\in S\;\;xy=yx=1\}$ is called {\em the maximal
subgroup} of a monoid $S$. For any subset $B$ by $\FM(B)$ we denote
the free abelian monoid generated by $M$. This is the subsemigroup
of the free abelian group $\FA(B)$ generated by the set
$B\cup\{1\}$, where $1$ is the neutral element of $\FA(B)$.

\begin{lem}\label{l6.5} Assume that a torsion-free Abelian topological group $G$ is countably compact at a dense infinite cyclic subgroup $Z\subset G$ that contains no non-trivial convergent sequence.
Then there is a Tychonoff countably
pracompact topological monoid $M$ such that
\begin{enumerate}
\item $M$ is algebraically isomorphic to the direct sum $\IZ\oplus \FM(\cc)$;
\item the maximal subgroup $H_1$ of $M$ is cyclic, discrete, and dense in $M$;
\item $M\setminus H_1$ is an ideal in $M$;
\item $M$ admits a continuous one-to-one homomorphism $h:M\to G$ such that $h(H_1)=Z$;
\item the semigroup $M$ is countably compact provided the group $G$ is countably compact and contains no non-trivial convergent sequence.
\end{enumerate}
\end{lem}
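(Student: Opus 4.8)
The plan is to realize $M$ as a submonoid of $G$ carrying a topology finer than the one induced from $G$, using Lemma~\ref{l6.3} to supply the needed accumulation points, and exploiting the free-monoid structure to get continuity of the operation for free.

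First I would fix a generator $z_0$ of the infinite cyclic group $Z$ and put $L_0=\{z_0\}$, a linearly independent set of size $1<\cc$. Set $H:=G$ if $G$ is countably compact and contains no non-trivial convergent sequence, and $H:=Z$ otherwise; in either case $G$ is countably compact at the subgroup $H$ and $H$ contains no non-trivial convergent sequence, so Lemma~\ref{l6.3} applies and enlarges $L_0$ to a linearly independent set $L\subset G$ with $|L|=\cc$ such that $B:=L\setminus L_0$ (of size $\cc$) contains an accumulation point of every infinite subset of $\FA(L)\cap H$. Now define the monoid $M=\IZ\oplus\FM(B)$ with unit group $H_1=\IZ\oplus\{0\}$, and let $h\colon M\to G$ be the homomorphism extending $n\mapsto nz_0$ on $\IZ$ and the inclusion $B\hookrightarrow G$ on $\FM(B)$. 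Since $L$ is linearly independent, $h$ is the restriction of the isomorphism $\IZ\oplus\FA(B)\cong\FA(L)$ and is therefore injective, with $h(H_1)=Z$; this already yields (1), (4), and the algebraic content of (2) and (3) (the complement $M\setminus H_1$ is an ideal because $u+v=0$ in the free monoid forces $u=v=0$).

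Next I would topologize $M$ by the coarsest topology making $h$ continuous and every point of $H_1$ isolated; equivalently, the initial topology with respect to the maps $g\circ h$ (for $g\in C(G,[0,1])$) together with the indicator functions of the singletons $\{x\}$, $x\in H_1$. As this is an initial topology with respect to a point-separating family of $[0,1]$-valued maps, $M$ is Tychonoff; the subgroup $H_1$ is discrete, and since $Z=h(H_1)$ is dense in $G$, every basic neighbourhood $h^{-1}(U)$ of an ideal point meets $H_1$, so $H_1$ is dense. The operation is jointly continuous: if $x+y\in H_1$ then, as observed above, $x,y\in H_1$ are isolated; otherwise I pull back, using continuity of addition in $G$, neighbourhoods $U_1\ni h(x)$ and $U_2\ni h(y)$ with $U_1+U_2\subset U$, so that $h^{-1}(U_1)+h^{-1}(U_2)\subset h^{-1}(U)$. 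It is precisely the monoid (not group) structure that makes this easy: two non-invertible elements can never combine into the isolated subgroup $H_1$, so the cancellation phenomenon that obstructs continuity for $\C(p,q)$ never arises here. This shows $M$ is a Tychonoff topological monoid and completes (2), (3).

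Finally I would read off the compactness properties from the accumulation property of $L$. For pracompactness it suffices that $M$ be countably compact at the dense set $H_1$: any infinite $A\subset H_1$ has $h(A)\subset Z\subset\FA(L)\cap H$ infinite, hence an accumulation point $b\in B$ by Lemma~\ref{l6.3}; as $b\notin Z$ by linear independence, $h^{-1}(b)\in M\setminus H_1$ is non-isolated, and its neighbourhoods $h^{-1}(U)$ meet $A$ infinitely, so $h^{-1}(b)$ is an accumulation point of $A$ in $M$. For (5), in the case where $G$ is countably compact (so we chose $H=G$) the accumulation property of Lemma~\ref{l6.3} covers every infinite subset of $\FA(L)\supset h(M)$, whence an arbitrary infinite $A\subset M$ has $h(A)$ accumulating at some $b\in B\subset h(M)$, and $h^{-1}(b)$ is an accumulation point of $A$ in $M$. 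The delicate step is exactly this last one: one must prevent accumulation points of infinite subsets of the ideal $M\setminus H_1$ from escaping $h(M)$, and that is what forces the choice $H=G$ (legitimate only under the stronger hypothesis) so that Lemma~\ref{l6.3} deposits all required accumulation points on the generating set $B$, safely inside $h(M)$.
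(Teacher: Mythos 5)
Your proposal is correct and takes essentially the same route as the paper: there, Lemma~\ref{l6.3} is applied to $L_0=\{a\}$ and $M$ is taken to be the subsemigroup of $G$ generated by $\{-a,a\}\cup L$ (exactly your $\IZ\oplus\FM(B)$ transported along $h$), with the induced topology refined by isolating the points of $Z=H_1$, which coincides with your initial topology. The only cosmetic differences are that you build $M$ abstractly and pull the topology back through $h$, and that you use the accumulation points deposited in $L\setminus L_0$ by Lemma~\ref{l6.3} directly, where the paper routes the countable compactness at $H_1$ through Lemma~\ref{l1}.
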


\begin{proof} Let $H=Z$ if $G$ is not countably compact and $H=G$ if $G$ is countably compact.
Let $a\in Z$ be a generator of the cyclic group $Z$. By Lemma~\ref{l6.3}, the linearly
independent set $L_0=\{a\}$ can be enlarged to a linearly
independent subset $L\subset G$ of size $|L|=\cc$ that generates the (free abelian) subgroup $\FA(L)$ in $G$ such that for
each infinite subset $A\subset H\cap \FA(L)\subset G$ the closure $\bar A$ meets the
set $L\setminus L_0$. Let $M$ be the subsemigroup of $G$ generated
by the set $\{-a,a\}\cup L$. Since each infinite subset of $Z\subset H\cap\FA(L)$ has
an accumulation point (in $L\subset M$), the space $M$ is
countably compact at the subset $Z$. 
If $G$ is countably compact, then $H=G$ and then $M$ is countably compact because each infinite subset of $M\subset H\cap \FA(L)$ has an accumulation point in $L\subset M$. 

It is clear that $M$ is a monoid whose maximal subgroup $H_1$
coincides with $Z$ and thus is dense in $M$. Also it is clear that
$M$ is algebraically isomorphic to $\IZ\oplus \FM(\cc)$.

Now we enlarge the topology $\tau$ on $M$ induced from $G$ in
order to make the maximal subgroup $Z=H_1$  discrete. It is easy
to see that the topology
$$\tau'=\{U\cup A:U\in\tau,\;A\subset Z\}$$ on $M$ has the
required property: $Z$ becomes discrete but remains dense in this
topology. It is easy to check that the space $M$ endowed with this
stronger topology remains a topological semigroup (this follows
from the fact that $M\setminus Z$ is an ideal in $M$). Moreover,
the topological space $(M,\tau')$ is Tychonoff, see
\cite[5.1.22]{En}.

It remains to check that the space $(M,\tau')$ is countably
compact at $H_1$. Take any infinite subset $A\subset H_1=Z$. By Lemma~\ref{l1},
the closure $\bar A$ of $A$ in the topology $\tau$ has size $|\bar
A|\ge\cc$ and consequently, $\bar A$ contains a point $a\notin Z$.
It follows from the definition of the topology $\tau'$ that the
point $a$ remains an accumulation point of the set $A$ in the
topology $\tau'$.

If the group $G$ is countably compact, then so is the semigroup $M$ and the preceding argument ensures that $M$ remains countably compact in the stronger topology $\tau'$. 
\end{proof}

Now we are ready to present the 

\noindent{\bf Proof of Theorem~\ref{t6.2}}. Fix an Abelian torsion-free topological group $G$ which is countably compact at a dense infinite cyclic subgroup $Z\subset G$ containing  no non-trivial convergent sequence. For $G$ we can take the Bohr compactification $b\IZ$ of the group of integers $\IZ$ and for $Z$ the image $\IZ^\sharp$ of $\IZ$ in $b\IZ$. It is well-known that the Bohr compactification $b\IZ$ is torsion-free and its subgroup $\IZ^\sharp$ contains no non-trivial convergent sequence, see \cite{vD} or \cite{GHW}. 

 By Lemmas~\ref{l6.3} and \ref{l6.5}, there is a
commutative Tychonoff topological monoid $M$ such that the maximal subgroup $H_1$ of $M$ is cyclic, discrete, and dense in
$M$, $M$ is countably compact at $H_1$, and $M\setminus H_1$ is an ideal in $M$. Let
$h:\IZ\to H_1$ be any isomorphism. Define a homomorphism
$\pi:\C(p,q)\to M$ letting $\pi(p^nq^m)=h(n-m)$ for $n,m\in\w$. By
Theorem~\ref{t4.1}(3), the semitopological semigroup
$S=\C(p,q)\cup_\pi M$ is a topological semigroup. By
Propositions~\ref{p1.1} and \ref{p1.6}, the space $S$ is Tychonoff
and countably pracompact.

Moreover, if the group $G$ is countably compact and contains no non-trivial convergent sequence, then the semigroup $M$ is countably compact according to Lemma~\ref{l6.5}(5),  and then the semigroup $S$ is countably compact by Proposition~\ref{p1.4}.
\hfill $\square$
\medskip

Let us remark that the above proof yields a bit more than required in Theorem~\ref{t6.2}, namely: 

\begin{thm}\label{t6.7} If there is a torsion-free Abelian countably compact topological group $G$ without non-trivial convergent sequences, then there exists a Tychonoff countably compact semigroup $S$ containing a bicyclic semigroup.
\end{thm}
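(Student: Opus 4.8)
The plan is to assemble the machinery already developed in the earlier sections, since Theorem~\ref{t6.7} is essentially the ``countably compact'' refinement of Theorem~\ref{t6.2}, obtained by feeding a stronger hypothesis on $G$ through the same chain of lemmas. First I would start with the hypothesized torsion-free Abelian countably compact topological group $G$ having no non-trivial convergent sequences. The key preliminary observation is that such a $G$ must contain a dense infinite cyclic subgroup $Z$ at which $G$ is trivially countably compact (every infinite subset of $G$, and in particular of $Z$, has an accumulation point in $G$ by countable compactness), and $Z$ inherits from $G$ the property of containing no non-trivial convergent sequence. This puts us in position to invoke Lemma~\ref{l6.5}, but now with the hypothesis that $G$ itself is countably compact and has no convergent sequences, so that the conclusion of part~(5) of that lemma applies.

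Next I would apply Lemma~\ref{l6.5} to obtain a Tychonoff topological monoid $M$ that is algebraically isomorphic to $\IZ\oplus\FM(\cc)$, whose maximal subgroup $H_1$ is cyclic, discrete and dense in $M$, whose complement $M\setminus H_1$ is an ideal, and which is now \emph{countably compact} (not merely countably pracompact) by Lemma~\ref{l6.5}(5). The crucial gain over the proof of Theorem~\ref{t6.2} is exactly this upgrade: the $H=G$ branch in the proof of Lemma~\ref{l6.5}, combined with Lemma~\ref{l6.3} applied to the whole of $H\cap\FA(L)$, guarantees that every infinite subset of $M$ (not just of $Z$) picks up an accumulation point in $L\setminus L_0\subset M$, and the topology-strengthening step preserves countable compactness because of Lemma~\ref{l1}.

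Finally I would attach the bicyclic semigroup exactly as in the proof of Theorem~\ref{t6.2}: fix an isomorphism $h:\IZ\to H_1$, define $\pi:\C(p,q)\to M$ by $\pi(p^nq^m)=h(n-m)$, and form $S=\C(p,q)\cup_\pi M$. Since $H_1$ is discrete in $M$ and $M\setminus H_1$ is an ideal, Theorem~\ref{t4.1}(3) shows $S$ is a genuine topological semigroup. Tychonoffness of $S$ follows from Proposition~\ref{p1.1}. The new point is that, because $M$ is countably compact, Proposition~\ref{p1.4} (with $\kappa=1$) immediately gives that $S=\C(p,q)\cup_\pi M$ is countably compact, and $S$ contains the copy of $\C(p,q)$ by construction.

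I expect the only genuine obstacle to be bookkeeping rather than mathematics: one must make sure the chain of invocations matches the precise hypotheses, in particular that the $H=G$ case of Lemmas~\ref{l6.3} and~\ref{l6.5} is the one being used and that the existence of a suitable dense cyclic $Z\subset G$ is justified (this uses only that $G$ is infinite, torsion-free and countably compact). Everything else is a verbatim repetition of the proof of Theorem~\ref{t6.2} with ``countably pracompact'' replaced by ``countably compact'' at each stage, so no substantial new estimate is required.
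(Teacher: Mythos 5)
Your overall route is exactly the paper's: Theorem~\ref{t6.7} is obtained by rerunning the proof of Theorem~\ref{t6.2} through the $H=G$ branch of Lemmas~\ref{l6.3} and~\ref{l6.5}, so that Lemma~\ref{l6.5}(5) upgrades the monoid $M$ from countably pracompact to countably compact, after which one attaches the bicyclic semigroup via $\pi(p^nq^m)=h(n-m)$ and cites Theorem~\ref{t4.1}(3) (topological semigroup), Proposition~\ref{p1.1} (Tychonoff) and Proposition~\ref{p1.4} with $\kappa=1$ (countable compactness). All of that is correct and is precisely what the paper means by ``the above proof yields a bit more.''

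There is, however, one genuine flaw: your ``key preliminary observation'' that such a $G$ \emph{must} contain a dense infinite cyclic subgroup, which you claim ``uses only that $G$ is infinite, torsion-free and countably compact.'' That implication is false: a group with a dense cyclic subgroup is monothetic, and infinite torsion-free countably compact groups need not be monothetic. For instance $b\IZ\times b\IZ$ is infinite, torsion-free and compact, yet admits no dense cyclic subgroup: compact monothetic groups are exactly the Pontryagin duals of subgroups of the discrete circle $\IT_d$, and the dual $\IT_d\times\IT_d$ of $b\IZ\times b\IZ$ does not embed into $\IT_d$ (its $2$-torsion is $(\IZ/2\IZ)^2$, while that of $\IT_d$ is cyclic of order $2$). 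Nothing in the extra hypothesis of Theorem~\ref{t6.7} (absence of non-trivial convergent sequences) repairs this, so as written you cannot feed your $(G,Z)$ into Lemma~\ref{l6.5}, whose hypothesis requires $Z$ to be \emph{dense} in $G$. The fix is cheap, and it is the reduction the paper tacitly performs: pick any $a\neq 0$ in $G$ (torsion-freeness makes $Z=\langle a\rangle$ infinite cyclic) and replace $G$ by the closed subgroup $G'=\cl_G(Z)$. Then $G'$ is countably compact (a closed subspace of a countably compact space), torsion-free Abelian, contains no non-trivial convergent sequences (all these properties pass to closed subgroups), and $Z$ is dense in $G'$. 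Running your argument with $(G',Z)$ in place of $(G,Z)$ — in particular applying Lemmas~\ref{l6.3} and~\ref{l6.5} with the branch $H=G'$, so that every infinite subset of $M\subset\FA(L)$ acquires an accumulation point in $L\setminus L_0\subset M\setminus Z$, where the strengthened topology $\tau'$ agrees with the original one — closes the gap; the remainder of your proof then coincides with the paper's.
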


The first example of a group $G$ with properties required in Theorem~\ref{t6.7} was constructed by M.~Tka\-chenko under the Continuum Hypothesis \cite{Tk}. Later, the
Continuum Hypothesis was weakened to Martin's Axiom for
$\sigma$-centered posets by A.~Tomita in \cite{Tom96}, for
countable posets in \cite{KTW}, and finally to the existence
of continuum many incomparable selective ultrafilters in \cite{MGT}.
Yet, the problem of the existence of a countably compact group without convergent sequences in ZFC seems to be open, see \cite{DS}.

Those consistency results combined with Theorem~\ref{t6.7} imply

\begin{cor}\label{c6.7} Martin's Axiom implies the existence of a Tychonoff countably compact topological semigroup $S$ that contains a bicyclic semigroup.
\end{cor}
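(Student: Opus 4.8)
The plan is to deduce this corollary directly from Theorem~\ref{t6.7}, which already reduces the entire task to producing a single set-theoretic object: a torsion-free Abelian countably compact topological group $G$ that contains no non-trivial convergent sequence. Once such a $G$ is available, Theorem~\ref{t6.7} supplies the desired Tychonoff countably compact topological semigroup $S$ containing a copy of the bicyclic semigroup $\C(p,q)$, and no further construction is needed. So the only thing to verify is that Martin's Axiom is strong enough to guarantee the existence of a group $G$ of this kind.

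First I would recall that full Martin's Axiom implies its restriction to $\sigma$-centered posets, since every $\sigma$-centered poset is ccc; by Bell's theorem this restricted axiom is equivalent to the cardinal equality $\mathfrak p=\cc$. Under precisely this weaker hypothesis, A.~Tomita~\cite{Tom96} constructed a torsion-free Abelian countably compact topological group without non-trivial convergent sequences, refining the original construction of M.~Tkachenko~\cite{Tk}, which was carried out under the Continuum Hypothesis. Thus Martin's Axiom guarantees the existence of a group $G$ meeting every hypothesis of Theorem~\ref{t6.7}.

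Applying Theorem~\ref{t6.7} to this $G$ then yields the countably compact Tychonoff topological semigroup $S$ asserted in the corollary, completing the argument. The main obstacle is not located in the present deduction, which is immediate, but is entirely encapsulated in Tomita's consistency result producing $G$; accordingly I would invoke that construction as a black box rather than reproduce it, and merely note that the full strength of Martin's Axiom comfortably exceeds the $\sigma$-centered fragment actually required.
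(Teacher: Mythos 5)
Your proposal is correct and follows essentially the same route as the paper: the authors likewise obtain Corollary~\ref{c6.7} by combining Theorem~\ref{t6.7} with Tomita's construction \cite{Tom96} (under Martin's Axiom for $\sigma$-centered posets, which full MA implies) of a torsion-free Abelian countably compact group without non-trivial convergent sequences. Your added remark that MA restricted to $\sigma$-centered posets suffices (and its equivalence to $\mathfrak p=\cc$) is accurate but not needed for the deduction.
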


\begin{rem}\label{r6.6}
By Theorem~\ref{t5.1}(5), the almost periodic compactification
$\AP(S)$ of the countably (pra)\-compact semigroup $S\supset\C(p,q)$
constructed in Theorem~\ref{t6.7} (or \ref{t6.2}) is a compact topological group.
Consequently, the canonic homomorphism $\eta:S\to\AP(S)$ is not
injective in contrast to the canonic homomorphism
$\eta:S\to\WAP(S)=\beta S$ which is a topological embedding by
Theorem~\ref{rez2}. In particular, $S$ is a countably (pra)compact topological semigroup that does not embed into a compact topological semigroup.
\end{rem}

\section{Some Open Problems}

The consistency nature of Theorem~\ref{t6.7} and Corollary~\ref{c6.7} suggests:

\begin{prob}
Is there a ZFC-example of a countably compact topological
semigroup that contains the bicyclic semigroup?
\end{prob}

Another open problem was suggested by the referee:

\begin{prob} Is there a pseudocompact topological semigroup $S$ that a contains the bicyclic semigroup as a closed subsemigroup?
\end{prob} 

Theorem~\ref{t6.2} gives an example of a countably pracompact
topological semigroup $S$ for which the canonical homomorphism
$\eta:S\to\AP(S)$ is not injective.

\begin{prob}
Is there a non-trivial countably (pra)compact topological semigroup $S$
whose almost periodic compactification $\AP(S)$ is a singleton?
\end{prob}

\section{Acknowledgment}

The authors would like to express their thanks to the referee for very careful reading the manuscripts and many valuable suggestions and to Sasha Ravsky who turned the attention of the authors to the class of countably pracompact spaces, intermediate between the classes of countably compact and pseudocompact spaces. 


\begin{thebibliography}{MMm}

\bibitem{Arh} A.V. Arkhangel'ski\u\i, {\em Topological function spaces}, 
Kluwer Publ., Dordrecht, 1992.

\bibitem{Andersen1952} O.~Andersen, \emph{Ein Bericht \"{u}ber die
Struktur abstrakter Halbgruppen}, PhD Thesis, Hamburg, 1952.


\bibitem{BD} T.~Banakh, S.D.~Dimitrova, {\em Openly factorizable
spaces and compact extensions of topological semigroups}, Comment. Math. Univ. Carolin. {\bf 51}:1 (2010), 113--131.


\bibitem{BDG} T.Banakh, S.Dimitrova, O.Gutik, {\em The Rees-Suschkewitsch Theorem for simple topological semigroups}, Mat. Stud. {\bf 31}:2 (2009),  211--218.

\bibitem{BJM} J.~Berglund, H.~Junghenn, P.~Milnes, {\em Analysis on semigroups. Function spaces, compactifications, representations},  A Wiley-Intersci. Publ. John Wiley \&\ Sons, Inc., New York, 1989.


\bibitem{CP} A.H.~Clifford and G.B.~Preston, \emph{The Algebraic
Theory of Semigroups}, Vol. I. Amer. Math. Soc. Surveys 7, 1961;
Vol. II. Amer. Math. Soc. Surveys 7, 1967.

\bibitem{DS} D.~Dikranjan, D.~Shakhmatov, {\em Selected topics in the structure theory of topological groups}, in: Open Problems in Topology, II (E.~Pearl ed.), Elsevier, 2007.
 p.389--406.

\bibitem{vD} E.~van Douwen, {\em The maximal totally bounded group topology on $G$ and the biggest minimal $G$-space, for abelian groups $G$}, Topology Appl. {\bf 34}:1  (1990), 69--91.

\bibitem{ES} C.~Eberhart, J.~Selden, {\em On the closure of the
bicyclic semigroup},
Trans. Amer. Math. Soc. {\bf 144} (1969), 115--126.

\bibitem{En} R.~Engelking, \emph{General Topology},
2nd ed., Heldermann, Berlin, 1989.

\bibitem{GHW} J.~Galindo, S.~Hern\'andez, Ta-Sun Wu, {\em Recent results and open questions relating Chu duality and Bohr compactifications of locally compact groups}, in: Open Problems in Topology, II (E.~Pearl ed.), Elsevier, 2007. p.407--422.


\bibitem{GFTW} S.~Garcia-Ferreira, A.~H.~Tomita, S.~Watson, {\em
Countably compact groups from a selective ultrafilter}, Proc.
Amer. Math. Soc. {\bf 133} (2005), 937--943.



\bibitem{GPR} O.~Gutik, D.~Pagon, D.~Repov\v s,
{\em  On sequential countably compact topological semigroups},
preprint (http://arxiv.org/abs/0804.1449v1).

\bibitem{GR} O.Gutik, D.Repovs, {\em On countably compact 0-simple topological inverse semigroups}, Semigroup Forum. {\bf 75}:2 (2007), 464--469.


\bibitem{HK} J.A.~Hildenbrandt, R.J.~Koch, {\em Swelling actions
of $\Gamma$-compact semigroups}, Semigroup Forum. {\bf 33}:1
(1988), 65--85.

\bibitem{HS} N.~Hindman, D.~Strauss,  {\em Algebra in the
Stone-\v{C}ech compactification. Theory and applications,} de
Gruyter Expositions in Mathematics, 27. Walter de Gruyter \&\ Co.,
Berlin, 1998.

\bibitem{KW} R.J.~Koch, A.D.~Wallace, \emph{Stability in
semigroups}, Duke Math. J. \textbf{24} (1957), 193--195.

\bibitem{KTW} P.~Koszmider, A.~Tomita, S.~Watson, {\em Forcing
countably compact group topologies on a larger free Abelian
group}, Topology Proc. {\bf 25} (2000), 563--574.

\bibitem{MGT} R.~Madariaga-Garcia, A.H. Tomita, {\em Countably
compact topological group topologies on free Abelian groups from
selective ultrafilters}, Topology Appl. {\bf 154} (2007), 1470--1480.

\bibitem{Megrel} M.~Megrelishvili, {\em Every semitopological
semigroup compactification of the group $H\sb +[0,1]$ is trivial},
Semigroup Forum. {\bf 63}:3 (2001), 357--370.


\bibitem{Rees} D.~Rees, {\em On semi-groups}, Proc. Cambridge
Phil. Soc. {\bf 36} (1940), 387--400.

\bibitem{Rez} E.A.~Reznichenko, {\em Extension of functions defined on
products of pseudocompact spaces and continuity of the inverse in
pseudocompact groups}, Topology Appl. {\bf 59}:3 (1994), 233--244.


\bibitem{RS} D.~Robbie, S.~Svetlichnyi, {\em An answer to A.D.
Wallace's question about countably compact cancellative
semigroups}, Proc. Amer. Math. Soc. {\bf 124}:1 (1996),  325--330.

\bibitem{Rup} W.~Ruppert, {\em Compact Semitopological Semigroups:
An Intrinsic Theory}, LNM {\bf 1079}, Springer, 1984.

\bibitem{Tk} M.~Tkachenko, {\em Countably compact and
pseudocompact topologies on free Abelian groups}, Soviet Math.
(Iz. VUZ) {\bf 34}:5 (1990), 79--86.


\bibitem{Tom96} A.H.~Tomita, {\em The Wallace problem: A counterexample
from MA${}_\mathrm{countable}$ and $p$-compactness}, Canad. Math.
Bull. {\bf 39}:4 (1996), 486--498.


\end{thebibliography}
\end{document}